\def\Xint#1{\mathchoice
    {\XXint\displaystyle\textstyle{#1}}%
    {\XXint\textstyle\scriptstyle{#1}}%
    {\XXint\scriptstyle\scriptscriptstyle{#1}}%
    {\XXint\scriptscriptstyle\scriptscriptstyle{#1}}%
    \!\int}
    \def\XXint#1#2#3{{\setbox0=\hbox{$#1{#2#3}{\int}$}
    \vcenter{\hbox{$#2#3$}}\kern-.5\wd0}}
    \def\fint{\Xint-}
\theoremstyle{plain}
\newtheorem{thm}{Theorem}[section]
\newtheorem{lem}[thm]{Lemma}
\newtheorem{cor}[thm]{Corollary}
\newtheorem{prop}[thm]{Proposition}
\newtheorem{example}[thm]{Example}
\DeclareMathOperator{\supp}{supp}
\theoremstyle{definition}
\newtheorem{defn}[thm]{Definition}
\theoremstyle{remark}
\newtheorem{remark}[thm]{Remark}
\newcommand{\bremark}{\begin{remark} \em}
\newcommand{\eremark}{\end{remark} }
\title{Sobolev spaces and Poincar\'e  inequalities on the Vicsek fractal}
\author{Fabrice Baudoin\footnote{Partly supported by the NSF grant DMS~1901315.} and Li Chen\footnote{Partly supported by Simons collaboration grant \#853249.}}
\date{}
\begin{document}

\maketitle

\begin{abstract}
In this paper we prove that several natural approaches to Sobolev spaces coincide on the Vicsek fractal. More precisely, we show that the metric approach of Korevaar-Schoen, the approach by limit of discrete $p$-energies and the approach by limit of Sobolev spaces on cable systems all yield the same functional space with equivalent norms for $p>1$. As a consequence we  prove that the Sobolev spaces form a real interpolation scale.   We also obtain $L^p$-Poincar\'e inequalities  for all values of $p \ge 1$.
\end{abstract}

\tableofcontents

\section{Introduction}

The theory of Sobolev spaces on abstract metric measure spaces has attracted a lot of attention in the last few decades and the upper gradient approach has proved to be one of the most successful approaches to develop a rich theory, see \cite{MR3363168} and the references therein. However, due to the generic lack of rectifiable curves between points, the approach  by upper gradients techniques is not relevant anymore in the context of fractals.

For  fractals, the theory of Sobolev spaces can  be developed from several viewpoints. A first natural approach is through the study of discrete p-energies as in Herman-Peirone-Strichartz \cite{HPS}, Hu-Ji-Wen \cite{HuJiWen}, and more recently Cao-Gu-Qiu \cite{CaoGuQiu},  Kigami \cite{MR4175733,Kigami}, and  Shimizu \cite{Shimizu}. This approach makes a  crucial use of the fact that fractals can be approximated by discrete spaces in a somehow canonical way. A second natural and purely metric approach is based on the Korevaar-Schoen approach and defines Sobolev spaces as endpoints in a scale of Besov-Lipschitz spaces, see \cite{MR4196573}, \cite{BV3} and  \cite{BC2020}. Finally, a third natural approach in the context of nested fractals, is to define Sobolev spaces as the  functional spaces whose traces on approximating cable systems are the Sobolev spaces of that cable system. 

A  first goal of this paper is to prove that those three approaches are actually equivalent and, for $p>1$,  all yield the same space $W^{1,p}$ in a popular example of nested fractal: the Vicsek set, see Figure \ref{figure1}. We achieve this goal in Theorem \ref{carac Sobolev} below. The case $p=1$ is discussed separately in the text, and the space $W^{1,1}$ we single out is a strict subspace of the space of BV functions that was defined in \cite{BV3}  in the general context of Dirichlet spaces with sub-Gaussian heat kernel estimates.  We note that some parts of the proof of  Theorem \ref{carac Sobolev} make  use of the notion of piecewise affine function which is specific to the Vicsek set setting. In particular, our arguments do not easily extend to the case of other nested fractals like the Sierpinski gasket and the study of this fractal is left to a later work.

We  also prove the following family of Poincar\'e inequalities on the Vicsek set $K$: for $p \ge  1$, there exist constants $c,C>0$ such that for any $f \in W^{1,p}$,  $x_0\in K$ and $r>0$ we have:
\[
\int_{B(x_0,r)} \left|f(x)-\frac1{\mu(B(x_0,r))}\int_{B(x_0,r)}f d\mu\right|^p d\mu(x) 
\le Cr^{p-1+d_h} \| f \|_{W^{1,p}(B(x_0,cr))}^p,
\]
where $d_h$ is the Hausdorff dimension of the Vicsek set. The exponent $p-1+d_h$ is sharp as follows from Remark \ref{sharp poinc}. Our proof is based on the introduction of a notion of weak gradient on the Vicsek set which is similar to the notion of exterior derivative considered in \cite{BK} for cable systems (or more generally spaces of Hino index one).   We note that the study of Poincar\'e inequalities on some nested fractals including the Vicsek set was undertaken in \cite{BC2020} where some stronger inequalities were proved in the  range $1 \le p \le 2$ using  heat semigroups techniques instead. The case $p>2$ was let open in \cite{BC2020} and therefore the present paper settles the question of the validity of the Poincar\'e inequality for all the range $p \ge 1$. 

The Poincar\'e inequalities we obtain imply that any Sobolev function $f \in W^{1,p}$, $p>1$ satisfies a Lusin-H\"older estimate:
\[
| f(x)-f(y)| \le  d(x,y)^{1-\frac1p+\frac{d_h}{p} }( g(x) +g(y))
\]
where  $g$ is a function in weak $L^p$. We show that the function $g$ can not be in $L^p$ however, unless the function $f$ is constant. This shows in particular that the Haj\l{}asz-Sobolev space introduced by Hu in \cite{MR1972194} is trivial at the critical exponent $\alpha_p$ for the case of the Vicsek set.  Therefore, in the context of fractals,  the approach to Sobolev spaces due to P. Haj\l{}asz \cite{Haj} does not  yield a satisfactory theory.

Another objective of the paper is to study the real interpolation properties of the Sobolev spaces and obtain, for the Vicsek set, an analogue of the main result of the paper by Gogatishvili-Koskela-Shanmugalingam \cite{GKS}.  Specifically, we prove that for every $p > 1$ and $0\le \alpha \le\alpha_p:=1-\frac1p+\frac{d_h}{p} $
\[
\mathcal{B}^{\alpha}_{p,\infty}=(L^p, W^{1,p})_{\alpha/\alpha_p,\infty},
\]
where  $\mathcal{B}^{\alpha}_{p,\infty}$ is a Besov-Lipschitz space which coincides with a heat semigroup based Besov space introduced and studied in \cite{BV1}, see also \cite{Gri} \cite{MR3420351}, \cite{MR2574998} for further characterizations and properties of the Besov-Lipschitz spaces. We also prove that the Sobolev spaces form, with respect to the parameter $p \ge 1$ a real interpolation scale, i.e., for $1 \le p_1 <p < p_2 \le +\infty$,
\[
W^{1,p}=(W^{1,p_1},W^{1,p_2})_{\theta,p}
\]
where $\theta \in (0,1)$ is such that
\[
\frac{1}{p}=\frac{1-\theta}{p_1}+\frac{\theta}{p_2}.
\]
Note that by the reiteration theorem we therefore obtain the full interpolation theory for the spaces $\mathcal{B}^{\alpha}_{p,\infty}$ including  the endpoints $\mathcal{B}^{\alpha_p}_{p,\infty}=W^{1,p}$.

\paragraph{Notations:}
\begin{enumerate}
    \item Throughout the paper, we use the letters $c,C, c_1, c_2, C_1, C_2$  to denote positive constants which may vary from line to line. 
    
    \item For two non-negative functionals $\Lambda_1, \Lambda_2$ defined on a functional space $\mathcal F$, the notation $\Lambda_1(f) \simeq \Lambda_2(f)$ indicates that there exist two constants $C_1, C_2>0$ such that for every $f\in \mathcal F$, $C_1\Lambda_1(f)\le \Lambda_2(f)\le C_2 \Lambda_1(f)$.
    
\item For any Borel set $A$ and any measurable function $f$, we write  the average of $f$ on the set $A$ as
\[
\fint_A f(x) d\mu(x):=\frac1{\mu(A)}\int_A f(x) d\mu(x).
\]
\end{enumerate}

\section{Preliminaries and notations}
\subsection{Vicsek set}

\begin{figure}[htb]\label{figure1}
 \noindent
 \makebox[\textwidth]{\includegraphics[height=0.22\textwidth]{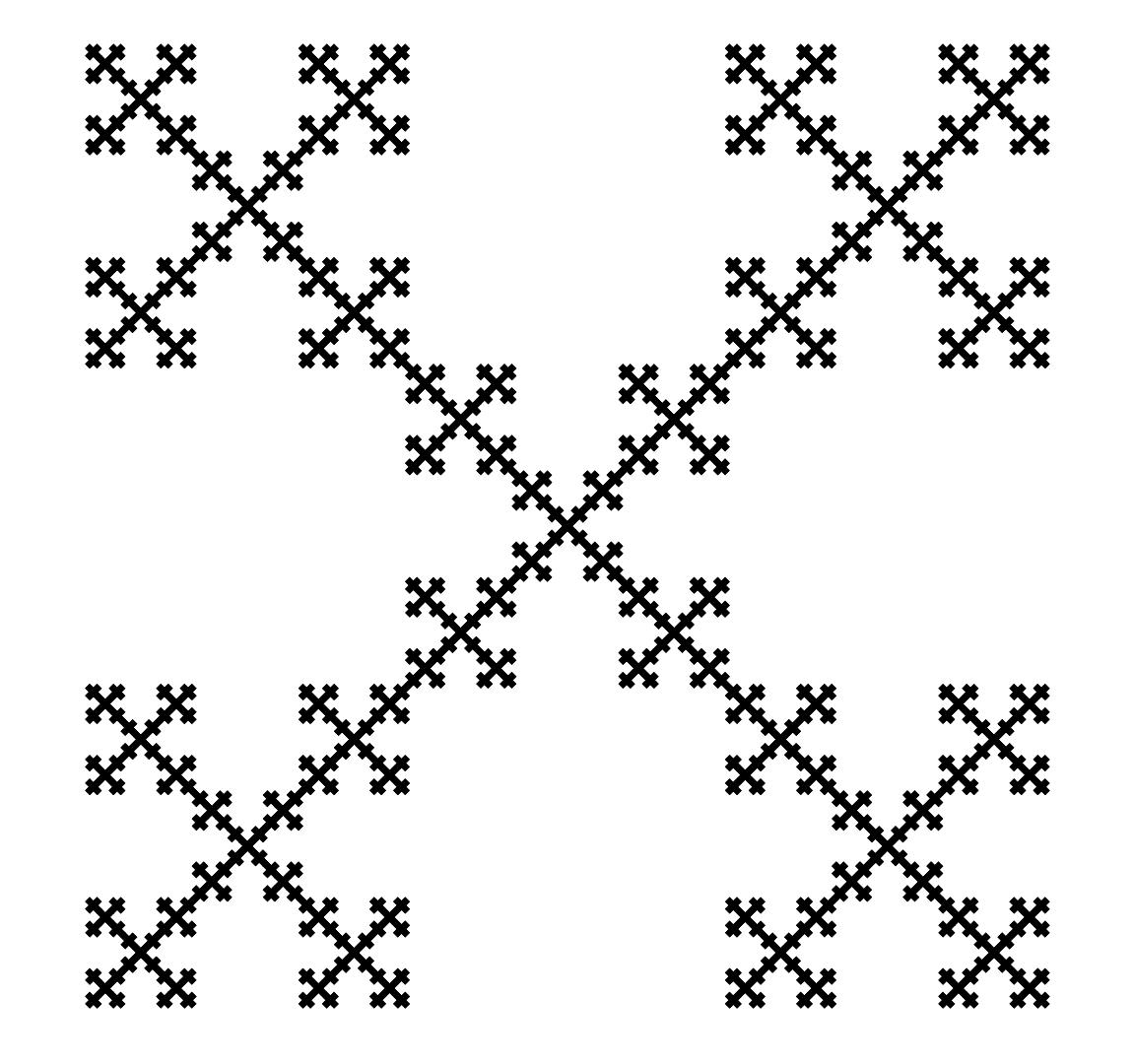}}
  	\caption{Vicsek set}
\end{figure}
Let $q_1=(-\sqrt{2}/2,\sqrt{2}/2)$, $q_2=(\sqrt{2}/2,\sqrt{2}/2)$ , $q_3=(\sqrt{2}/2,-\sqrt{2}/2)$, and $q_4=(-\sqrt{2}/2,-\sqrt{2}/2)$ be the 4 corners of the unit square and let $q_5=(0,0)$ be the center of that square. Define $\psi_i(z)=\frac13(z-q_i)+q_i$ for $1\le i\le 5$. Then the Vicsek set $K$ is the unique non-empty compact set such that 
\[
K=\bigcup_{i=1}^5 \psi_i(K).
\]


\begin{figure}[htb]\label{figure2}
 \noindent
 \makebox[\textwidth]{\includegraphics[height=0.20\textwidth]{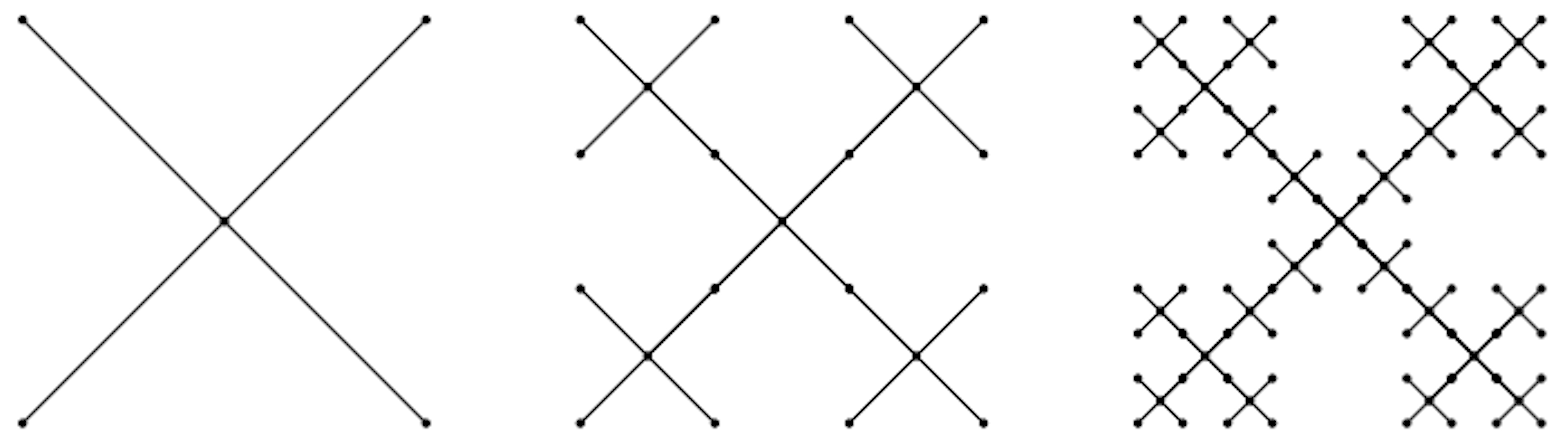}}
  	\caption{Vicsek cable systems $\bar{V}_0$, $\bar{V}_1$ and $\bar{V}_2$}
\end{figure}

Denote $W=\{1, 2,3,4, 5\}$ and $W_n=\{1, 2,3,4, 5\}^n$ for $n\ge 1$. For any $w=\{i_1, \cdots, i_n \}\in W_n$, we denote by $\Psi_w$ the contraction mapping $\psi_{i_1}\circ \cdots\circ \psi_{i_n}$ and write $K_w:=\Psi_w(K)$. The set $K_w$ is called  an $n$-simplex.

Let $V_0=\{q_1, q_2, q_3,q_4,q_5\}$. 
We define a sequence of  sets of vertices  $\{V_n\}_{n\ge 0}$ inductively by
\[
V_{n+1}=\bigcup_{i=1}^5 \psi_i(V_n).
\]
For any $w=\{i_1, \cdots, i_n \}\in W_n$, we will denote $V_n^w=\Psi_w(V_n)$.
Let then $\bar{V}_0$ be the cable system\footnote{Cable systems are also sometimes called quantum graphs or metric graphs in the literature} with vertices $V_0=\{q_1,q_2, q_3, q_4,q_5\}$
 and  consider the sequence of cable systems  $\bar{V}_n$ with vertices in $V_n$ inductively defined as follows.  The first cable system is  $\bar{V}_0$ and then 
\[
\bar{V}_{n+1} = \bigcup_{i=1}^5 \psi_i (\bar{V}_n).
\]
Note that $\bar{V}_n \subset K$ and that $K$ is the closure of $\cup_{n \ge 0} \bar{V}_n $. 
The set 
\[
\mathcal{S}= \bigcup_{n \ge 0} \bar{V}_n
\]
is called the skeleton of $K$ and is dense in $K$.   Therefore we have a natural increasing sequence of Vicsek cable systems $\{\bar{V}_n\}_{n\ge 0}$ whose edges have length $3^{-n}$ and whose set of vertices is $V_n$ (see Figure \ref{figure2}). From this viewpoint, the Vicsek set $K$ is seen as a limit of the cable systems $\{\bar{V}_n\}_{n\ge 0}$. 

 If $u,v$ are adjacent vertices in  $\bar{V}_n$ we will write $u \sim v$. We then denote by $\mathbf{e}(u,v)$  the edge in $\bar{V}_n$ connecting $u$ to $v$. We will say that $u \le v$ if the geodesic distance from the center $(0,0)$ of the Vicsek set  to $u$ in  $\bar{V}_n$ is less than the geodesic distance from $(0,0)$ to $v$.

\subsection{Geodesic distance and measures on the Vicsek set}

On $K$ we will consider the  geodesic distance $d$. For $x,y \in \bar{V}_n$, $d(x,y)$ is defined as the length of the geodesic path between $x$ and $y$ and $d$ is then extended by continuity to $K \times K$. The geodesic distance $d$ is then bi-Lipschitz equivalent to the restriction of the Euclidean distance to $K$.  The Hausdorff  measure $\mu$ is the normalized measure on $K$ such that $i_1, \cdots, i_n \in \{1,2,3, 4,5\}$
\[
\mu(\psi_{i_1} \circ \cdots \circ \psi_{i_n} (K))=5^{-n}.
\]
The Hausdorff dimension of $K$ is then $d_h=\frac{\log 5}{\log 3}$ and the metric space $(K,d)$ is $d_h$-Ahlfors regular in the sense that there exist constants $c,C>0$ such that for every $x \in K$, $r \in [0, \mathrm{diam}\,  K]$,
\[
c \,r^{d_h} \le \mu (B(x,r)) \le Cr^{d_h},
\]
where $B(x,r)=\left\{ y \in K, d(x,y) \le r \right\}$ denotes the closed ball with center $x$ and radius $r$ and $\mathrm{diam} \, K=2$ is the diameter of $K$.
%

There is also a reference measure $\nu$ on the skeleton $\mathcal{S}$, the Lebesgue measure. It is characterized by the property that for every edge $\mathbf{e}$  $\bar{V}_n$ connecting two neighboring vertices:
\[
\nu(\mathbf{e})= 3^{-n}.
\]
The measure $\nu$ is not finite (because the skeleton has infinite length) but it is $\sigma$-finite on the $\sigma$-field generated by the $\mathbf{e}(x,y)$, $x,y \in V_n$, $x \sim y$, $n \ge 0$. The measure  $\nu$ is not a Radon measure neither since the measure of any  ball with positive radius is infinite.
From its definition, it is also clear that $\nu$ is singular with  respect to the Hausdorff measure $\mu$ since  the  skeleton has $\mu$-measure zero. For further comments about this measure $\nu$, we also refer for instance to the introduction of \cite{CSW}.

\subsection{Korevaar-Schoen-Sobolev spaces on the Vicsek set}

We now introduce the definitions of the Korevaar-Schoen-Sobolev spaces on the Vicsek set, following the previous works \cite{BV3,BC2020}. In particular, in this paper, we will use the following notations and definitions.

\begin{defn}\label{def:p-KSS}
Let $p > 1$. The Korevaar-Schoen-Sobolev space $W^{1,p}(K)$ is defined by
\[
W^{1,p}(K)=\left\{ f \in L^p(K,\mu), \,  \limsup_{r\to 0^+} \frac{1}{r^{\alpha_p}}\Bigg( \int_K\int_{B(x,r)}\frac{|f(y)-f(x)|^p}{ \mu(B(x,r))}\, d\mu(y)\, d\mu(x)\Bigg)^{\frac1p} <+\infty \right\},
\]
where $\alpha_p=1-\frac1p+\frac{d_h}{p}$. The semi-norm of $f\in W^{1,p}(K)$ is given by 
\[
\|f\|_{W^{1,p}(K)}:=\limsup_{r\to 0^+} \frac{1}{r^{\alpha_p}}\Bigg( \int_K\int_{B(x,r)}\frac{|f(y)-f(x)|^p}{ \mu(B(x,r))}\, d\mu(y)\, d\mu(x)\Bigg)^{\frac1p},
\]
and for a Borel subset $A \subset  K$ we will denote
\[
\|f\|^p_{W^{1,p}(A)}:=\limsup_{r\to 0^+} \frac{1}{r^{p\alpha_p}} \int_A\int_{B(x,r) \cap A}\frac{|f(y)-f(x)|^p}{ \mu(B(x,r))}\, d\mu(y)\, d\mu(x).
\]
\end{defn}

\begin{remark}
It is easy to prove, see \cite{BV3}, that for a function $f \in L^p(K,\mu)$, if $ \| f \|^p_{W^{1,p}(K)}<+\infty$ then
\[
 \sup_{r >0} \frac{1}{r^{p\alpha_p}} \int_K\int_{B(x,r) }\frac{|f(y)-f(x)|^p}{ \mu(B(x,r))}\, d\mu(y)\, d\mu(x) <+\infty.
\]
We will  further prove in Corollary \ref{cor:sup-limsup} that for every $p>1$, there exists a constant $C>0$ such that for every $f \in W^{1,p}(K)
$
\begin{equation*}
  \sup_{r >0} \frac{1}{r^{p\alpha_p}} \int_K\int_{B(x,r) }\frac{|f(y)-f(x)|^p}{ \mu(B(x,r))}\, d\mu(y)\, d\mu(x) 
 \le  C \| f \|^p_{W^{1,p}(K)}.
\end{equation*}
As a consequence,  using the $\sup$ or $\limsup$ in the definition of $W^{1,p}(K)$ eventually yields the same space with equivalent semi-norms.
\end{remark}

\begin{remark}\label{rem:continuity}
It follows from \cite{MR4196573} that for every $p>1$, $W^{1,p}(K) \subset C(K)$ where $C(K)$ denotes the set of continuous functions on $K$. More precisely, any function in $W^{1,p}(K)$ has a continuous $L^p$ representative, so in the sequel we will look at $W^{1,p}(K)$ as a subspace of $C(K)$ when $p>1$.
\end{remark}

\begin{remark}
The space $W^{1,2}(K)$ is the domain of the canonical self-similar Dirichlet form on $K$, see \cite{MR3420351}.
\end{remark}

\begin{defn}
For $p=\infty$, we define $W^{1,\infty}(K)$ to be the set of Lipschitz continuous functions on $K$ equipped with the seminorm
\[
\| f \|_{W^{1,\infty}(K)}= \sup_{x,y \in K, x\neq y} \frac{|f(x)-f(y)|}{d(x,y)}.
\]
\end{defn}

When $p=1$, the Korevaar-Schoen approach yields the space $BV(K)$ of bounded variations function on $K$, see Definition \ref{def:BV} and  \cite{BV3}. The definition of $W^{1,1}(K)$ is given below (see Definition \ref{defW11}) and is motivated from Theorem \ref{carac Sobolev} and Section \ref{inter sobo}.

\subsection{Discrete p-energies}

Another natural approach to Sobolev spaces on  fractals is by using limits of discrete $p$-energies, see \cite{HPS} and the more recent \cite{CaoGuQiu}, \cite{BC2020}, \cite{Kigami}, \cite{Shimizu}. For $1 \le p <+\infty$, the discrete $p$-energy of  a function  $f \in C(K)$ is defined as
\[
\mathcal{E}_p^m(f):=\frac{1}{2} 3^{(p-1)m}  \sum_{x,y \in V_m, x \sim y} |f(x)-f(y)|^p.
\]
For $p=+\infty$, we define
\[
\mathcal{E}_\infty^m(f):= 3^{m} \max_{x,y \in V_m, x \sim y}   |f(x)-f(y)|.
\]
Here the constant $3$ is in fact the resistance scale factor of the Vicsek set $K$.
For a subset $A\subset K$ we define for $1 \le p <+\infty$
\[
\mathcal{E}_{A,p}^m(f):=\frac{1}{2} 3^{(p-1)m} \sum_{x,y\in A \cap V_m, x \sim y} |f(x)-f(y)|^p, 
\]
and
\[
\mathcal{E}_{A,\infty}^m(f):= 3^{m} \max_{x,y \in A \cap V_m , x \sim y}   |f(x)-f(y)|.
\]

The subset $A \subset K$ will be called convex if for any two points $x,y \in  A\cap \mathcal S$ the geodesic path connecting $x$ to $y$ is included in $A\cap \mathcal S$. For instance, any ball $B(x_0,r)$ in $K$ is convex. If $A$ is convex, as a consequence of the basic  inequalities
\[
|x+y+z|^p\le 3^{p-1}(|x|^p+|y|^p+|z|^p), \quad |x+y+z| \le 3 \max ( |x|,|y|,|z|)
\]
and of  the tree structure of $A \cap V_m$ we always have for $1 \le p \le +\infty$
\begin{equation}\label{eq:increaseEn}
\mathcal{E}_{A,p}^m(f)\le \mathcal{E}_{A,p}^n(f), \quad \forall \, m,n\in \mathbb N,m\le n.
\end{equation}
Moreover, from this fact we deduce that   
\begin{equation}\label{eq:lim}
\lim_{n\to \infty}\mathcal{E}_{A,p}^n(f)=\sup_{n\ge 0} \mathcal{E}_{A,p}^n(f)=\limsup_{n\to \infty}\mathcal{E}_{A,p}^n(f)=\liminf_{n\to \infty}\mathcal{E}_{A,p}^n(f),
\end{equation}
where the above  quantities are in $\mathbb{R}_{\ge 0} \cup \{+\infty \}$.

\begin{defn}\label{def:p-energyVS}
Let $1 \le p \le +\infty$. For any convex subset $A\subset K$ and $f \in C(K)$, we define the (possibly infinite) $p$-energy on $A$ by
\[
\mathcal E_{A,p}(f):=\lim_{m\to \infty} \mathcal E_{A,p}^m(f).
\]
\end{defn}
\begin{defn}
Let $1 \le p \le +\infty$. We define
\[
\mathcal{F}_p=\left\{ f \in C(K), \sup_{m \ge 0} \mathcal{E}_p^m(f) <+\infty \right\}
\]
and consider on $\mathcal F_p$ the seminorm
\[
\| f \|_{\mathcal{F}_p}=
\begin{cases}
\sup_{m \ge 0} \mathcal{E}_p^m(f)^{1/p}, \, 1 \le p <+\infty \\
\sup_{m \ge 0}  \mathcal{E}_\infty^m(f).
\end{cases}
\]
\end{defn}

\subsection{Piecewise affine functions}

A continuous function $\Phi:K \to \mathbb{R}$ is called $n$-piecewise affine, if there exists $n \ge 0$ such that $\Phi$ is piecewise affine on the cable system $\bar{V}_n$ (i.e linear between the vertices of $\bar{V}_n$)  and constant on any connected component of $\bar{V}_m \setminus \bar{V}_n$ for every $m >n$.  Piecewise affine functions provide a large and convenient set of \textit{test functions}. Indeed, for every $f \in C(K)$ and  $n \ge 0$ define $H_nf $ to be the unique $n$-piecewise affine function on $K$ that coincides with $f$ on $V_n$. By the construction of $H_nf$, it is clear that for every $n \ge 0$ and $w \in W_n$, we have for every $x \in K_w$,
\[
\inf_{K_w} f \le H_n f (x) \le \sup_{K_w} f.
\]
Since $f \in C(K)$, we deduce that $H_n f$ converges to $f$ uniformly on $K$. The following lemma is a useful property regarding $p$-energies for piecewise affine functions, see the proof of Theorem 5.8 in \cite{BC2020}. 
\begin{lem}\label{lem:p-energyPA}
Let $\Phi:K \to \mathbb{R}$ be an $n$-piecewise affine function. Then, for $1 \le p \le +\infty$, 
\ $\mathcal E_{p}^0 (\Phi) \le \cdots \le \mathcal E_{p}^n (\Phi)=\mathcal E_{p}^m(\Phi)$, where $m \ge n$, and $\mathcal E_{p} (\Phi)=\mathcal E_{p}^n (\Phi)$. In particular, $\Phi \in \mathcal{F}_p$ for every $1\le p \le +\infty $.
\end{lem}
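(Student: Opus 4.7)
The plan is to prove the key identity $\mathcal E_p^m(\Phi) = \mathcal E_p^n(\Phi)$ for all $m \ge n$ by directly analyzing the contribution of each edge of $\bar V_m$ to the $p$-energy. The monotonicity chain $\mathcal E_p^0(\Phi) \le \cdots \le \mathcal E_p^n(\Phi)$ is then an immediate consequence of the general inequality \eqref{eq:increaseEn} applied to the convex set $A = K$, the equality $\mathcal E_p(\Phi) = \mathcal E_p^n(\Phi)$ follows at once from the Definition \ref{def:p-energyVS} of $\mathcal E_p$ as a limit (and \eqref{eq:lim}), and the conclusion $\Phi \in \mathcal F_p$ is then automatic.

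By induction on $m-n$, it suffices to show $\mathcal E_p^{n+1}(\Phi) = \mathcal E_p^n(\Phi)$ (any $n$-piecewise affine function is also $(n+1)$-piecewise affine). I would split the edges of $\bar V_{n+1}$ into two groups: \textbf{(a)} subedges of some edge $\mathbf{e}(u_0,v_0)$ of $\bar V_n$ (each such edge is subdivided into three equal pieces of length $3^{-(n+1)}$ in $\bar V_{n+1}$), and \textbf{(b)} edges whose interior lies in a connected component of $\bar V_{n+1} \setminus \bar V_n$. The tree structure of the Vicsek cable system guarantees that every edge of $\bar V_{n+1}$ falls into exactly one of these two groups.

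For group (b), by the very definition of $n$-piecewise affine, $\Phi$ is constant on each connected component of $\bar V_{n+1} \setminus \bar V_n$; combined with the continuity of $\Phi$ at the unique vertex of $V_n$ where the component attaches to $\bar V_n$, this forces both endpoints $u,v$ of any such edge to satisfy $\Phi(u) = \Phi(v)$, so these edges contribute $0$ to $\mathcal E_p^{n+1}(\Phi)$. For group (a), $\Phi$ is linear on $\mathbf{e}(u_0,v_0)$, so each of its three subedges has $\Phi$-difference of absolute value $|\Phi(v_0)-\Phi(u_0)|/3$, and the combined contribution of these three subedges to $\mathcal E_p^{n+1}(\Phi)$ is
\[
 3^{(p-1)(n+1)} \cdot 3 \cdot \left(\frac{|\Phi(v_0) - \Phi(u_0)|}{3}\right)^p = 3^{(p-1)n}\,|\Phi(v_0)-\Phi(u_0)|^p
\]
for $p < +\infty$, which is exactly the contribution of $\mathbf{e}(u_0,v_0)$ to $\mathcal E_p^n(\Phi)$; an analogous max-based computation, using $3 \max(a,a,a) = 3a$, handles $p=+\infty$. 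Summing (resp. taking max) over all edges of $\bar V_n$ then gives $\mathcal E_p^{n+1}(\Phi) = \mathcal E_p^n(\Phi)$.

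The only mildly delicate point is the continuity argument in group (b): an edge $\mathbf{e}(u,v)$ of $\bar V_{n+1}$ with one endpoint $u\in V_n$ and the other $v$ in a new branch has $u \notin \bar V_{n+1}\setminus \bar V_n$, so the constancy statement in the definition of $n$-piecewise affine does not \emph{directly} yield $\Phi(u)=\Phi(v)$. One has to use that the constant value of $\Phi$ on the component must equal its continuous extension $\Phi(u)$ at the attachment vertex. Once this is settled, the rest of the argument is straightforward bookkeeping.
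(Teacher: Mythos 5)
Your proof is correct, and since the paper itself only cites the proof of Theorem 5.8 in \cite{BC2020} rather than giving an argument, your direct edge-by-edge computation (three collinear subedges of each $\bar V_n$-edge carrying $|\Delta|/3$ each, new-branch edges contributing zero by constancy plus continuity, and the scaling factor $3^{(p-1)(n+1)}\cdot 3\cdot 3^{-p}=3^{(p-1)n}$) is exactly the expected verification. One cosmetic imprecision: the point where a new branch attaches to $\bar V_n$ is generally an interior point of an edge of $\bar V_n$ (hence a vertex of $V_{n+1}$ but not of $V_n$); this does not affect your argument, since all you need is that $\Phi$ is continuous at a point of $\bar V_n$ lying in the closure of the component.
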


\subsection{Characterizations of the  Sobolev spaces}

One of the major goals of the paper will be to prove the following theorem which follows from the combination of Theorem \ref{thm:charactW1p}, Theorem \ref{thm:charactW1infty}, Proposition \ref{prop:comparison1} and Proposition \ref{prop:comparison2}.

\begin{thm}\label{carac Sobolev}
Let $1 < p  \le +\infty $. For $f \in C(K)$ the following are equivalent:

\begin{list}{$(\theenumi)$}{\usecounter{enumi}\leftmargin=1cm \labelwidth=1cm\itemsep=0.2cm\topsep=.0 cm \renewcommand{\theenumi}{\arabic{enumi}}}
\item $f \in W^{1,p}(K)$;
\item $f \in \mathcal F_p$;
\item There exists  (a unique) $\partial f \in L^p( \mathcal S, \nu)$ such that for every $n \ge 0$ and adjacent vertices $u,v \in V_n$ with $u \le v$,
\[
f(v)-f(u)=\int_{\mathbf{e}(u,v)} \partial f \,  d\nu.
\]
\end{list}
Moreover, on $W^{1,p}(K)$, one has
\[
\| f \|_{W^{1,p}(K)} \simeq \| \partial f \|_{L^p(K,\nu)} = \| f \|_{\mathcal F_p}. 
\]

\end{thm}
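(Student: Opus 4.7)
The plan is to establish the cyclic chain $(1) \Rightarrow (2) \Rightarrow (3) \Rightarrow (1)$, keeping track of the quantitative norm bounds along the way. The cases $1 < p < \infty$ and $p = \infty$ run in parallel but invoke different compactness tools (reflexivity of $L^p$ versus weak-$*$ compactness in $L^\infty$).

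The equivalence $(2) \Leftrightarrow (3)$ is the cleanest step. Define on each edge $\mathbf{e}(u,v) \subset \bar V_m$ with $u \le v$ the discrete derivative $\partial_m f := 3^m (f(v)-f(u))$, extended by $0$ off $\bar V_m$. A direct computation gives $\|\partial_m f\|_{L^p(\mathcal S,\nu)}^p = \mathcal E_p^m(f)$, and the telescoping identity $\int_{\mathbf{e}(u,v)} \partial_{m+1} f \, d\nu = f(v)-f(u)$ over the three sub-edges at level $m+1$ exhibits $(\partial_m f)$ as a refining family. Given $f \in \mathcal F_p$ with $p > 1$, boundedness of $(\partial_m f)$ in the reflexive space $L^p(\mathcal S,\nu)$ yields weak convergence along a subsequence to some $\partial f \in L^p$ with $\|\partial f\|_{L^p} \le \|f\|_{\mathcal F_p}$; testing the weak limit against the indicator of any fixed edge identifies $\int_{\mathbf{e}(u,v)} \partial f \, d\nu = f(v)-f(u)$. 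Conversely, given such a $\partial f$, H\"older on a single edge gives $|f(v)-f(u)|^p \le 3^{-m(p-1)} \int_{\mathbf{e}(u,v)} |\partial f|^p d\nu$, which upon summation yields $\mathcal E_p^m(f) \le \|\partial f\|_{L^p}^p$. So $\|f\|_{\mathcal F_p} = \|\partial f\|_{L^p}$. The case $p = \infty$ is analogous with weak-$*$ compactness in $L^\infty$ replacing reflexivity.

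For $(3) \Rightarrow (1)$, the crucial feature is that $K$ is a dendrite, so any two points are joined by a unique arc $[x,y] \subset K$. A telescoping argument extends the integral formula of $(3)$ to all $x,y$ in the skeleton: $f(y)-f(x) = \int_{[x,y]} \partial f \, d\nu$, and continuity extends this to all of $K$. H\"older then gives $|f(y)-f(x)|^p \le d(x,y)^{p-1} \int_{[x,y]} |\partial f|^p d\nu$. Inserting this into the Korevaar-Schoen integrand and applying Fubini reduces matters to bounding, for each $z \in \mathcal S$ and $r > 0$,
\[
\int_K \int_{B(x,r)} \mathbf{1}_{\{z \in [x,y]\}}\, \frac{d\mu(y)\, d\mu(x)}{\mu(B(x,r))}.
\]
Since $z$ disconnects $K$ into two components and $z \in [x,y]$ forces $x$ and $y$ to lie on opposite sides of $z$, Ahlfors regularity bounds this quantity by $C r^{d_h}$; combining yields $\|f\|_{W^{1,p}(K)}^p \le C \|\partial f\|_{L^p(\nu)}^p$.

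For $(1) \Rightarrow (2)$, I fix a level $m$ and exploit the continuity of $f$ (Remark~\ref{rem:continuity}) to write, for each adjacent pair $u \sim v$ in $V_m$,
\[
|f(u)-f(v)|^p \le \liminf_{r \to 0^+} \fint_{B(u,r)} \fint_{B(v,r)} |f(x)-f(y)|^p \, d\mu(y)\, d\mu(x).
\]
Taking $r \sim 3^{-m}$ ensures that $y \in B(v,r)$ satisfies $y \in B(x, C 3^{-m})$ for $x \in B(u,r)$; summing over adjacent pairs in $V_m$ and invoking the bounded overlap of the balls $\{B(u, c 3^{-m}) : u \in V_m\}$ together with Ahlfors regularity then yields $\mathcal E_p^m(f) \le C \|f\|_{W^{1,p}(K)}^p$ uniformly in $m$. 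This direction is the main obstacle: because $V_m$ has $\mu$-measure zero, the discrete edge-increments $|f(u)-f(v)|$ must be recovered from spatial averages via continuity, and the summation must be calibrated to match the sharp exponent $p\alpha_p = p-1+d_h$ that comes from the scaling relation $3^{-m} \leftrightarrow \mu(B(u, 3^{-m})) \simeq 3^{-m d_h}$.
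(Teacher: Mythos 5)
Your treatment of $(2)\Leftrightarrow(3)$ is correct and is essentially the paper's argument: your $\partial_m f$ is exactly $\partial(H_m f)$ for the piecewise affine interpolant, and weak compactness plus testing against edge indicators replaces the Mazur-lemma step used in Theorem \ref{thm:charactW1p}. Your $(3)\Rightarrow(1)$ is also sound and is a genuinely different, arguably cleaner route than Proposition \ref{prop:comparison2}: instead of covering $B(x_0,r)$ by $k$-simplices with bounded overlap, you apply Fubini to the Morrey bound $|f(x)-f(y)|^p\le d(x,y)^{p-1}\int_{[x,y]}|\partial f|^p\,d\nu$ and observe that $z\in[x,y]$ together with $d(x,y)\le r$ forces $x,y\in B(z,r)$, so each $z$ carries weight at most $\mu(B(z,r))\le Cr^{d_h}$; you do not even need the disconnection property, only that the geodesic through $z$ has length $d(x,y)$.

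The gap is in $(1)\Rightarrow(2)$, and as written it is fatal. You start from $|f(u)-f(v)|^p\le\liminf_{r\to0^+}\fint_{B(u,r)}\fint_{B(v,r)}|f(x)-f(y)|^p\,d\mu(y)\,d\mu(x)$ and then propose to ``take $r\sim3^{-m}$''; these two steps are incompatible. If $r\to0$, the normalization $\mu(B(u,r))^{-1}\mu(B(v,r))^{-1}\simeq r^{-2d_h}$ blows up: after summing over edges the best available bound is $\sum_{u\sim v}\fint_{B(u,r)}\fint_{B(v,r)}|f(x)-f(y)|^p\le Cr^{-2d_h}\int_K\int_{B(x,C3^{-m})}|f(x)-f(y)|^p\,d\mu\,d\mu$, which diverges as $r\to0$ because the balls $\{B(u,r)\}_{u\in V_m}$ cover only a set of measure $\simeq 5^m r^{d_h}\to0$ and the double integral can concentrate near the vertex pairs; there is no cancellation of the $r^{-2d_h}$. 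If instead $r\simeq3^{-m}$ is held fixed, the pointwise inequality $|f(u)-f(v)|^p\le C\fint_{B(u,r)}\fint_{B(v,r)}|f(x)-f(y)|^p$ is simply false for continuous $f$ with any uniform constant (take $f$ concentrated in a small spike near $u$). This is precisely the difficulty that Proposition \ref{prop:comparison1} is built to overcome, following Hu--Ji--Wen: one telescopes $f(x)-f(u)$ through a chain of shrinking $(m+j)$-simplices $S_j\downarrow\{x\}$, replacing the single scale $3^{-m}$ by a sum over all scales $3^{-(m+j)}$, $j\ge1$, of Korevaar--Schoen energies; the resulting series carries the factor $2^{(p-1)(j+1)}3^{-(p-1)(j-1)}$ and converges only because $p>1$. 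Any correct proof of $(1)\Rightarrow(2)$ must involve a multi-scale summation of this kind; single-scale averaging cannot recover values of $f$ on the $\mu$-null set $V_m$.
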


We will actually obtain stronger  results, since the equivalence of the seminorms will be proved to be uniform over metric balls.  The case $p=1$ has to be treated separately and is covered in Theorem \ref{thm:charactW11} and Proposition \ref{prop:comparison2}.

\section{Weak gradients and Poincar\'e inequalities}

%
\subsection{Characterization of $\mathcal F_p$}

We first prove the following result:

\begin{thm}\label{thm:charactW1p}
Let $1 < p <+ \infty $. Let $f \in C(K)$. The following are equivalent:
\begin{list}{$(\theenumi)$}{\usecounter{enumi}\leftmargin=1cm \labelwidth=1cm\itemsep=0.2cm\topsep=.0 cm \renewcommand{\theenumi}{\arabic{enumi}}}
\item $f \in \mathcal{F}_p$;
\item There exists $g \in L^p( \mathcal S, \nu)$ such that for every $n \ge 0$ and  for every adjacent $u,v \in V_n$ with $u \le v$,
\[
f(v)-f(u)=\int_{\mathbf{e}(u,v)} g \,  d\nu.
\]
\end{list}
Moreover, if $A \subset K$ is a  convex set, we have for every $f \in \mathcal F_p$, 
\[
\mathcal E_{A,p}(f)=\int_{A \cap \mathcal S} | g|^p d\nu.
\]
\end{thm}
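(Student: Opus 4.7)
The direction $(2)\Rightarrow(1)$ is immediate from Hölder's inequality applied edge by edge: since $\nu(\mathbf{e}(u,v))=3^{-n}$ for $u,v\in V_n$,
\[
|f(v)-f(u)|^p \le 3^{-(p-1)n}\int_{\mathbf{e}(u,v)} |g|^p \, d\nu,
\]
and summing over the $\nu$-disjoint edges of $\bar{V}_n$ produces $\mathcal{E}_p^n(f) \le \|g\|_{L^p(\mathcal{S},\nu)}^p$ uniformly in $n$, whence $f\in \mathcal{F}_p$.

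For $(1)\Rightarrow(2)$ I would construct $g$ as an $L^p$-martingale limit. Define
\[
g_n(x) := 3^n\,(f(v)-f(u)) \quad \text{for } x\in\mathbf{e}(u,v)\in \bar{V}_n,\ u\le v,
\]
and set $g_n=0$ on $\mathcal{S} \setminus \bar{V}_n$. Unwinding the definition gives $\|g_n\|_{L^p(\mathcal{S},\nu)}^p=\mathcal{E}_p^n(f)\le\|f\|_{\mathcal{F}_p}^p$. The key structural input, extracted from the action of the maps $\psi_i$, is that each edge of $\bar{V}_n$ is the disjoint union of exactly three edges of $\bar{V}_{n+1}$. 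Granting this, if $\mathbf{e}(u,v)\in \bar{V}_n$ is subdivided as $\mathbf{e}(u,w_1)\cup\mathbf{e}(w_1,w_2)\cup\mathbf{e}(w_2,v)$ with $u\le w_1\le w_2\le v$, the average of $g_{n+1}$ over $\mathbf{e}(u,v)$ telescopes to $g_n|_{\mathbf{e}(u,v)}$. Restricted to any fixed edge $\mathbf{e}\in \bar{V}_n$ (a finite-measure interval), the tail $(g_m|_{\mathbf{e}})_{m\ge n}$ is therefore an $L^p$-bounded martingale, and since $p>1$ Doob's $L^p$-convergence theorem yields a limit $g^{\mathbf{e}}\in L^p(\mathbf{e})$ with $g_m\to g^{\mathbf{e}}$ in $L^p(\mathbf{e})$ and $\nu$-a.e. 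Compatibility when one edge refines another is tautological, so these local limits glue into a single $g:\mathcal{S}\to\mathbb{R}$, and summing the local bounds while letting $\bar{V}_N\uparrow \mathcal{S}$ produces $\|g\|_{L^p(\mathcal{S},\nu)}^p\le\mathcal{E}_p(f)<+\infty$.

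To verify the integral identity, fix $\mathbf{e}(u,v)\in \bar{V}_n$: for every $m\ge n$, summing $g_m$ against $\nu$ along the chain of sub-edges telescopes to $\int_{\mathbf{e}(u,v)} g_m\, d\nu = f(v)-f(u)$, and $L^p$-convergence on $\mathbf{e}(u,v)$ combined with Hölder lets me pass to the limit. Uniqueness of such $g$ is immediate because the algebra generated by edges at all levels generates the Borel $\sigma$-algebra on $\mathcal{S}$. For the convex-set identity, convexity ensures that every $\bar{V}_m$-edge with both endpoints in $A$ lies entirely in $A\cap \mathcal{S}$, so $\mathcal{E}_{A,p}^m(f)=\int_{A\cap \bar{V}_m}|g_m|^p \, d\nu$; taking $m\to\infty$ using the monotonicity \eqref{eq:increaseEn} on the left and $L^p$-convergence on the right yields $\mathcal{E}_{A,p}(f)=\int_{A\cap \mathcal{S}}|g|^p\, d\nu$.

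The main technical obstacle I anticipate is the clean verification of the trisection-refinement $\bar{V}_n\hookrightarrow \bar{V}_{n+1}$ directly from the IFS: the five scaled copies $\psi_i(\bar{V}_0)$ must fit together so that each edge of $\bar{V}_n$ splits into exactly three edges of $\bar{V}_{n+1}$ (for example, the middle third of the spoke from $q_5$ to $q_1$ is supplied not by $\psi_5$ or by $\psi_1$ acting on the same spoke, but by the image $\psi_1(\mathbf{e}(q_5,q_3))$). Once this refinement structure and the attendant filtration on $\mathcal{S}$ are in place, the rest is a routine martingale-on-a-tree argument that relies only on $p>1$ via Doob's $L^p$-inequality.
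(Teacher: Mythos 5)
Your proof is correct, but it takes a genuinely different route from the paper's for the hard implication $(1)\Rightarrow(2)$. The paper also works with exactly your piecewise-constant densities (your $g_n$ is $\partial(H_nf)$ for the $n$-piecewise affine interpolation $H_nf$ of $f$), but it extracts the limit $g$ softly: the sequence is bounded in the reflexive space $L^p(\mathcal S,\nu)$, so weak compactness plus Mazur's lemma give convex combinations of a subsequence converging strongly in $L^p$, and the identity $f(v)-f(u)=\int_{\mathbf{e}(u,v)}g\,d\nu$ passes to the limit because $H_nf\to f$ uniformly. You instead exploit the trisection of each edge of $\bar{V}_n$ into three edges of $\bar{V}_{n+1}$ (which is indeed correct for the Vicsek set, consistent with edges of $\bar{V}_n$ having length $3^{-n}$, and your parenthetical about $\psi_1(\mathbf{e}(q_5,q_3))$ supplying the middle third is accurate) to see $(g_m)_{m\ge n}$ as an $L^p$-bounded martingale on each edge and invoke Doob's convergence theorem. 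Both arguments hinge on $p>1$ (reflexivity versus Doob's $L^p$ inequality); yours is more constructive, identifying $g$ as the $\nu$-a.e.\ limit of the discrete difference quotients rather than as a limit of convex combinations. One small point to tighten: in the convex-set identity, $\int_{A\cap\bar{V}_m}|g_m|^p\,d\nu$ also collects partial edges of $\bar{V}_m$ that meet $A$ without having both endpoints in $A$, so your displayed equality with $\mathcal E^m_{A,p}(f)$ should read as an equality with the integral over $A_m$, the union of level-$m$ edges having both endpoints in $A$; since convexity makes $A\cap\mathbf{e}$ a subinterval of each edge $\mathbf{e}$, one has $A_m\uparrow A\cap\mathcal S$ up to a $\nu$-null set and the limiting identity $\mathcal E_{A,p}(f)=\int_{A\cap\mathcal S}|g|^p\,d\nu$ is unaffected.
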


\begin{proof}
We first prove that (2) implies (1). Indeed, it follows from (2) and H\"older's inequality that for every $n\ge0$ and every convex set $A \subset K$,
\begin{align*}
    3^{(p-1)n}\sum_{x,y \in V_n\cap A, x \sim y} |f(x)-f(y)|^p
    &\le 3^{(p-1)n}\sum_{x,y \in V_n \cap A, x \sim y} \left(\int_{\mathbf{e}(x,y)} |g| \,  d\nu\right)^p
    \\ &\le \sum_{x,y \in V_n \cap A, x \sim y} \int_{\mathbf{e}(x,y)} |g|^p \,  d\nu \le 2\int_{A \cap \mathcal{S}} |g|^p d\nu.
\end{align*}
Hence
 \[
\mathcal{E}_{A,p} (f)=\sup_n \mathcal{E}_{A,p}^n (f) \le \int_{A \cap \mathcal{S}} |g|^p d\nu
 \]
 and we deduce that $f\in W^{1,p}(K)$ with $\mathcal E_{A,p}(f)\le \int_{A \cap \mathcal S} | g|^p d\nu$.
 
It remains to show that (1) implies (2). If $\Phi$ is a piecewise affine function, it is clear that there exists a piecewise constant function, denoted by $\partial  \Phi$, such that for every adjacent $u,v \in V_n$ with $u \le v$,
\[
\Phi(v)-\Phi(u)=\int_{\mathbf{e}(u,v)} \partial  \Phi \,  d\nu.
\]

Consider then $f \in \mathcal F_p$. For every $n \ge 0$, we define $H_nf$ to be the unique $n$-piecewise affine function on $K$ that coincides with $f$ on $V_n$. We have then for every convex set $A \subset K$
 \[
 \sup_n \int_{A \cap \mathcal{S}} |\partial (H_nf)|^p d\nu=  \sup_n \frac{1}{2} \, 3^{(p-1)n} \sum_{x,y \in V_n\cap A, x \sim y} |f(x)-f(y)|^p <+\infty.
 \]
 The reflexivity of $L^p(\mathcal S,\nu)$ and Mazur lemma imply then that there exists a convex combination of a subsequence of $\partial (H_nf)$ that converges in $L^p(\mathcal S,\nu)$ to some  $g \in L^p(\mathcal S,\nu)$. Since $H_nf$ converges uniformly to $f$, we have then for every adjacent $u,v \in V_n$ with $u \le v$,
\[
f(v)-f(u)=\int_{\mathbf{e}(u,v)}  g\,  d\nu,
\]
and furthermore
\[
\int_{A \cap \mathcal S} | g|^p d\nu \le \sup_n \int_{A \cap \mathcal{S}} |\partial (H_nf)|^p d\nu\le \mathcal E_{A,p}(f).
\]
\end{proof}

We now turn to the case $p=+\infty$.

\begin{thm}\label{thm:charactW1infty}
Let $f \in C(K)$. The following are equivalent:
\begin{list}{$(\theenumi)$}{\usecounter{enumi}\leftmargin=1cm \labelwidth=1cm\itemsep=0.2cm\topsep=.0 cm \renewcommand{\theenumi}{\arabic{enumi}}}
\item $f \in W^{1,\infty}(K)$;
\item $f \in \mathcal F_\infty$;
\item There exists $g \in L^\infty( \mathcal S, \nu)$ such that for every $n \ge 0$ and  for every adjacent $u,v \in V_n$ with $u \le v$,
\[
f(v)-f(u)=\int_{\mathbf{e}(u,v)} g \,  d\nu.
\]

\end{list}
Moreover, if $A \subset K$ is a  convex set, we have for every $f \in \mathcal F_\infty$, 
\[
\sup_{x,y \in A, x\neq y} \frac{|f(x)-f(y)|}{d(x,y)}=\mathcal E_{A,\infty}(f)=\| g \|_{L^\infty(A \cap \mathcal S,\nu)}.
\]

\end{thm}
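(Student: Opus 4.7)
The plan is to parallel the proof of Theorem \ref{thm:charactW1p}, substituting reflexivity plus the Mazur lemma by weak-$*$ sequential compactness of bounded sets in $L^\infty(\mathcal{S},\nu)$, and to connect the discrete class $\mathcal{F}_\infty$ to the Lipschitz norm via a telescoping estimate along geodesic paths in the cable systems $\bar{V}_n$.

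For $(3)\Rightarrow(2)$, every edge $\mathbf{e}(u,v)$ of $V_n$ contained in a convex $A$ gives $|f(v)-f(u)|\le 3^{-n}\|g\|_{L^\infty(A\cap\mathcal{S},\nu)}$, so multiplying by $3^n$ and taking the max yields $\mathcal{E}_{A,\infty}^n(f)\le\|g\|_{L^\infty(A\cap\mathcal{S},\nu)}$ for each $n$; passing to the sup gives $\mathcal{E}_{A,\infty}(f)\le\|g\|_{L^\infty(A\cap\mathcal{S},\nu)}$. For $(2)\Rightarrow(1)$, given $x,y\in A\cap V_n$ the geodesic in $\bar{V}_n$ from $x$ to $y$ has length $d(x,y)=k\cdot 3^{-n}$ and passes through $k+1$ vertices of $V_n\cap A$ by convexity; telescoping differences yields $|f(x)-f(y)|\le d(x,y)\,\mathcal{E}_{A,\infty}(f)$, which by density of $A\cap\mathcal{S}$ in $A$ and continuity of $f$ extends to all $x,y\in A$. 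The reverse inequality $\mathcal{E}_{A,\infty}^m(f)\le\sup_{x\neq y\in A}|f(x)-f(y)|/d(x,y)$ is immediate since $d(u,v)=3^{-m}$ for adjacent $u,v\in V_m$. Taking $A=K$ in particular gives (1).

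For $(1)\Rightarrow(3)$, I would work with the piecewise affine interpolants $H_n f$ whose piecewise constant derivatives satisfy $|\partial(H_n f)|=3^n|f(u)-f(v)|$ on $\mathbf{e}(u,v)$, so $\|\partial(H_n f)\|_{L^\infty(\mathcal{S},\nu)}\le\mathcal{E}_\infty^n(f)\le\|f\|_{\mathcal{F}_\infty}$. Since $\nu$ is $\sigma$-finite and $L^1(\mathcal{S},\nu)$ is separable, Banach--Alaoglu furnishes a weak-$*$ subsequential limit $g\in L^\infty(\mathcal{S},\nu)$. Testing weak-$*$ convergence against $\mathbf{1}_{\mathbf{e}(u,v)}\in L^1(\mathcal{S},\nu)$, combined with $\int_{\mathbf{e}(u,v)}\partial(H_m f)\,d\nu=f(v)-f(u)$ for every $m\ge n$ (since $H_m f$ agrees with $f$ on $V_n\subset V_m$), identifies $g$ as the representative required in (3). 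Lower semicontinuity of the essential supremum under weak-$*$ limits, together with $\|\mathbf{1}_{A\cap\mathcal{S}}\partial(H_n f)\|_{L^\infty(\mathcal{S},\nu)}\le\mathcal{E}_{A,\infty}^n(f)\le\mathcal{E}_{A,\infty}(f)$, then yields $\|g\|_{L^\infty(A\cap\mathcal{S},\nu)}\le\mathcal{E}_{A,\infty}(f)$, closing the chain of equalities.

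The principal obstacle is the loss of reflexivity at $p=\infty$, which rules out the Mazur-lemma extraction of Theorem \ref{thm:charactW1p}. Replacing it by weak-$*$ compactness requires separability of $L^1(\mathcal{S},\nu)$ for sequential extraction, and some care is needed to ensure that restricting $\partial(H_n f)$ to $A\cap\mathcal{S}$ still obeys the sharper bound $\mathcal{E}_{A,\infty}^n(f)$; this is immediate when $A$ is a union of full $n$-edges for all large $n$, which is the case for the metric balls in $K$ that drive the localized applications of the theorem.
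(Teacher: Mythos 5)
Your argument is correct in its essentials but takes a genuinely different route from the paper at the step producing $g$. The paper proves $(1)\Rightarrow(3)$ directly: it restricts the Lipschitz function $f$ to each edge $\mathbf{e}(u,v)$, invokes a weak form of Rademacher's theorem on that compact interval to get an a.e.\ derivative bounded by $\|f\|_{W^{1,\infty}(K)}$, and patches these together using the tree structure; no compactness argument is needed. You instead run the $p<\infty$ machinery of Theorem \ref{thm:charactW1p} with the interpolants $H_nf$, replacing reflexivity and Mazur's lemma by Banach--Alaoglu in $L^\infty(\mathcal S,\nu)=(L^1(\mathcal S,\nu))^*$ (with separability of $L^1(\mathcal S,\nu)$, which holds since $\mathcal S$ is a countable union of intervals carrying Lebesgue measure) and weak-$*$ lower semicontinuity of the dual norm. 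Both work; the paper's version is more elementary and gives the pointwise bound $|g|\le \|f\|_{W^{1,\infty}(K)}$ for free, while yours has the virtue of treating $1<p\le\infty$ uniformly. Your handling of $(3)\Rightarrow(2)$, $(2)\Rightarrow(1)$ via telescoping along geodesics in $\bar V_n$, and the trivial reverse inequality is the same as the paper's.

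The one soft spot is the localized inequality $\|g\|_{L^\infty(A\cap\mathcal S,\nu)}\le\mathcal E_{A,\infty}(f)$ for a general convex $A$. Your route bounds $\|\mathbf{1}_{A\cap\mathcal S}\,\partial(H_nf)\|_{L^\infty}$ by $\mathcal E^n_{A,\infty}(f)$, but $\mathcal E^n_{A,\infty}(f)$ only sees edges with \emph{both} endpoints in $A\cap V_n$, whereas $\mathbf{1}_{A\cap\mathcal S}\,\partial(H_nf)$ also picks up edges that $A$ cuts through. Your escape hatch --- that $A$ is a union of full $n$-edges for large $n$ --- fails even for metric balls: $B(x_0,r)$ generically truncates the edges it meets at distance exactly $r$ from $x_0$, for every $n$. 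A clean fix that avoids the issue entirely: once you know $\mathcal E_{A,\infty}(f)=\sup_{x,y\in A,\,x\ne y}|f(x)-f(y)|/d(x,y)=:L_A$ and that $f(y)-f(x)=\int_{\mathbf{e}(x,y)}g\,d\nu$ for all pairs $x\le y$ lying on a common edge (which follows from (3) by the tree structure, density and continuity), Lebesgue differentiation along each edge gives $|g(s)|\le L_A$ at $\nu$-a.e.\ interior point $s$ of $A\cap\mathcal S$; since $A\cap\mathcal S$ meets each edge in a subinterval by convexity, the exceptional boundary points form a countable, hence $\nu$-null, set. With that substitution the chain of equalities closes for every convex $A$.
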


\begin{proof}
We  begin with the proof that (3) implies (1). It follows from (3) that for every adjacent $u,v \in V_n$,
\[
| f (u)-f(v)| \le \| g \|_{L^\infty(\mathcal S, \nu)} \nu ( \mathbf{e}(u,v)) = \| g \|_{L^\infty(\mathcal S, \nu)} d(u,v).
\]
Using the tree structure of $V_n$ and the triangle inequality, this yields that for every $u,v \in V_n$,
\[
| f (u)-f(v)| \le \| g \|_{L^\infty(\mathcal S, \nu)} \nu ( \gamma_n(u,v)) =\| g \|_{L^\infty(\mathcal S, \nu)} d(u,v),
\]
where $\gamma_n(u,v)$ denotes the shortest path in $\bar V_n$ connecting $u$ and $v$.
Since $K$ is the closure of the skeleton $\mathcal S=\cup_n \bar V_n$ and $f$ is continuous, we deduce that $f$ is Lipschitz on $K$ and thus in $W^{1,\infty}(K)$ with $\| f \|_{W^{1,\infty}(K)} \le  \| g \|_{L^\infty(\mathcal S, \nu)}$.

We now prove that (1) implies (3). Let $u,v\in V_n$, $u \sim v$, $u \le v$.  If $f \in W^{1,\infty}(K)$, then its restriction to $\mathbf{e}(u,v)$ is Lipschitz continuous. Since $\mathbf{e}(u,v)$ is a compact interval and $\nu$ induces the Lebesgue measure of that interval, we deduce from well-known real analysis results (a weak version of Rademacher theorem) that there exists a function $g$ on $\mathbf{e}(u,v)$ which is $\nu$ essentially bounded  by $\| f \|_{W^{1,\infty}(K)}$  such that
\[
f(v)-f(u)=\int_{\mathbf{e}(u,v)} g \,  d\nu.
\]
Using a covering of $\bar{V}_n$ by its edges, we obtain a function $g$  defined on $\bar{V}_n$  such that for every $u,v\in V_n$, $u \sim v$, $u \le v$
\[
f(v)-f(u)=\int_{\mathbf{e}(u,v)} g \,  d\nu.
\]
Using the tree structure of $\bar{V}_n$, we see that this function $g$ is independent from $n$. 

Since the fact that (1) implies (2) with $\mathcal{E}_\infty (f) \le \| f \|_{W^{1,\infty}(K)}$  is obvious, we are left with the fact that (2) implies (1). We note that (2) implies that for every $x,y \in V_m$, $x \sim y$, 
\[
  |f(x)-f(y)| \le \mathcal{E}_\infty (f) \, d(x,y) .
\]
Using the tree structure of $V_m$ and triangle inequality, one gets that for every $x,y \in V_m$, 
\[
  |f(x)-f(y)| \le \mathcal{E}_\infty (f) \, d(x,y) .
\]
Using the density of $\cup_m V_m$ in $K$ and the continuity of $f$ finishes the proof that $f \in W^{1,\infty}(K)$ with $\| f \|_{W^{1,\infty}(K)} \le \mathcal{E}_\infty (f) $.

When $A \subset K$ is a convex set, the equality
\[
\sup_{x,y \in A, x\neq y} \frac{|f(x)-f(y)|}{d(x,y)}=\mathcal E_{A,\infty}(f)=\| g \|_{L^\infty(A \cap \mathcal S,\nu)}
\]
follows by similar arguments.
\end{proof}

For $p=1$ the situation is slightly different.

\begin{thm}\label{thm:charactW11}
 Let $f \in C(K)$. The following are equivalent:
\begin{list}{$(\theenumi)$}{\usecounter{enumi}\leftmargin=1cm \labelwidth=1cm\itemsep=0.2cm\topsep=.0 cm \renewcommand{\theenumi}{\arabic{enumi}}}
\item $f \in \mathcal{F}_1$;
\item There exists a finite signed measure $\gamma_f$ on $\mathcal S$  such that for every $n$ and $u \in V_n$ $|\gamma_f | (\{ u \})=0$  and  for every adjacent $u,v \in V_n$ with $u \le v$,
\[
f(v)-f(u)=\gamma_f (\mathbf{e}(u,v)).
\]
\end{list}
Moreover, if $A \subset K$ is a  convex set, we have for every $f \in \mathcal F_1$, 
\[
\mathcal E_{A,1}(f)=|\gamma_f| (A \cap \mathcal S ).
\]
\end{thm}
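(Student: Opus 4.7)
The proof splits into the two implications plus the identification of the total variation in the ``Moreover'' part. For $(2) \Rightarrow (1)$, I would adapt the argument from Theorem \ref{thm:charactW1p}. Given $\gamma_f$ satisfying the hypotheses, one has $|f(v)-f(u)| \le |\gamma_f|(\mathbf{e}(u,v))$ for every pair of adjacent $u,v \in V_n$; since by convexity the edges of $\bar{V}_n$ with both endpoints in $V_n \cap A$ lie entirely in $A$ and overlap only at vertices of $V_n$ (which are $|\gamma_f|$-null by hypothesis), summing produces
\[
\mathcal{E}_{A,1}^n(f) \le |\gamma_f|(A \cap \bar{V}_n) \le |\gamma_f|(A \cap \mathcal{S}),
\]
and passing to the supremum over $n$ gives $\mathcal{E}_{A,1}(f) \le |\gamma_f|(A \cap \mathcal{S})$.

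The direction $(1) \Rightarrow (2)$ is the substantive one. The Mazur-lemma extraction used for $p>1$ is unavailable because $L^1$ is not reflexive, so $\gamma_f$ must be constructed directly. The plan is to go edge by edge, using the classical correspondence between continuous functions of bounded variation on a compact interval and non-atomic signed Borel measures. The key observation is that for $f \in \mathcal{F}_1$, the restriction of $f$ to any edge $\mathbf{e} = \mathbf{e}(u,v)$ of $\bar{V}_n$ is continuous (since $f \in C(K)$) and of bounded variation: its discrete variation over $V_m \cap \mathbf{e}$ is at most $\mathcal{E}_1^m(f) \le \|f\|_{\mathcal{F}_1}$ for every $m \ge n$, and since the mesh of $V_m \cap \mathbf{e}$ tends to $0$, continuity of $f|_{\mathbf{e}}$ yields $V(f|_{\mathbf{e}}) \le \|f\|_{\mathcal{F}_1}$.

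For each edge $\mathbf{e}$ of $\bar{V}_n$, I would then let $\gamma_{f, \mathbf{e}}$ denote the signed Borel measure on $\mathbf{e}$ associated to the continuous BV function $f|_{\mathbf{e}}$, which is non-atomic and satisfies $\gamma_{f, \mathbf{e}}([u', v']) = f(v') - f(u')$ on any sub-interval. The crucial consistency check is that when an edge $\mathbf{e}$ of $\bar{V}_n$ is subdivided into three sub-edges $\mathbf{e}'$ of $\bar{V}_{n+1}$, the restriction $\gamma_{f, \mathbf{e}}|_{\mathbf{e}'}$ coincides with $\gamma_{f, \mathbf{e}'}$; this holds because both non-atomic measures agree on the generating $\pi$-system of sub-intervals. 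Summing over edges at level $n$ produces compatible measures that patch into a signed measure $\gamma_f$ on $\mathcal{S} = \bigcup_n \bar{V}_n$ satisfying
\[
|\gamma_f|(\mathcal{S}) = \sup_n \sum_{\mathbf{e} \in \bar{V}_n} V(f|_{\mathbf{e}}) \le \sup_n \mathcal{E}_1^n(f) = \|f\|_{\mathcal{F}_1}.
\]
The additivity $f(v) - f(u) = \gamma_f(\mathbf{e}(u,v))$ is built into the construction, and $|\gamma_f|(\{u\}) = 0$ for $u \in V_n$ is immediate from the non-atomicity of each $\gamma_{f, \mathbf{e}}$.

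For the ``Moreover'' identity, the inequality $\mathcal{E}_{A,1}(f) \le |\gamma_f|(A \cap \mathcal{S})$ is the content of $(2) \Rightarrow (1)$ above; the reverse inequality follows by restricting the total-variation computation to edges lying inside the convex set $A$ and invoking the same monotone-refinement argument. The main obstacle throughout is the absence of reflexivity at $p=1$, which forces the edge-by-edge geometric construction of $\gamma_f$ instead of a functional-analytic limit; the tree structure of the Vicsek skeleton is what makes this per-edge approach globally consistent.
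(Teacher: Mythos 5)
Your proposal is correct and follows essentially the same route as the paper: show that $f$ restricted to each edge is a continuous function of bounded variation (via the discrete variation bound, refinement, and continuity), invoke the classical correspondence with a non-atomic signed measure on that edge (the paper phrases this via the Jordan decomposition $f=f_1-f_2$ into continuous monotone functions), patch the edge measures together using the tree structure, and obtain the two inequalities of the ``Moreover'' identity from the sum-over-edges bound and the monotone-refinement bound respectively. The only cosmetic difference is your explicit remark that Mazur's lemma is unavailable at $p=1$, which the paper leaves implicit.
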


\begin{proof}
Assume (2). In that case, for any convex set $A \subset K$
 \[
\mathcal{E}_{A,1}^m(f)=\frac{1}{2}  \sum_{x,y\in A \cap V_m, x \sim y} |f(x)-f(y)| \le \frac{1}{2}  \sum_{x,y\in A \cap V_m, x \sim y} |\gamma_f| ( \mathbf{e}(x,y))\le |\gamma_f| (A \cap \bar{V}_m ) .
\]
Therefore $f \in \mathcal F_1$ and $\mathcal E_{A,1}(f) \le |\gamma_f| (A \cap \mathcal S )$.
Assume (1) and fix $n \ge 0$, $u,v \in V_n$ be adjacent. From the triangle inequality we have for every $m \ge n$ and  $x,y \in \mathbf{e}(u,v) \cap V_m$
\[
|f(x)-f(y)|\le \mathcal{E}_{\mathbf{e}(x,y),1}^m(f)\le \mathcal{E}_{\mathbf{e}(x,y),1}(f).
\]
Similarly for every $N\ge 2$, and $x_i \in \mathbf{e}(u,v) \cap (\cup_{m\ge n}V_m)$, $1 \le i  \le N$, such that $x_1<\cdots< x_N$ we have
\[
\sum_{i=1}^{N-1} |f(x_{i+1})-f(x_i)| \le \mathcal{E}_{\mathbf{e}(u,v),1}(f).
\]
By continuity of $f$ and density of $\cup_{m\ge n}V_m$ in $K$ we deduce that
\[
\sum_{i=1}^{N-1} |f(x_{i+1})-f(x_i)| \le \mathcal{E}_{\mathbf{e}(u,v),1}(f).
\]
holds for every $N\ge 2$, and $x_i \in \mathbf{e}(u,v)$, $1 \le i  \le N$, such that $x_1<\cdots<x_N$. This means that the restriction of $f$ to the edge $\mathbf{e}(u,v)$ is a continuous bounded variation function. Therefore from a classical result in real analysis, there exist two non-decreasing continuous functions $f_1$ and $f_2$ on $\mathbf{e}(u,v)$ such that $f =f_1-f_2$ on $\mathbf{e}(u,v)$. We can then define a unique finite signed measure $\gamma_f$ on $\mathbf{e}(u,v)$ such that
\[
\gamma_f (\mathbf{e}(x,y))=(f_1(y)-f_1(x))-(f_2(y)-f_2(x)), \, x,y \in \mathbf{e}(u,v), x \le y .
\]
Note that
\[
|\gamma_f |(\mathbf{e}(x,y))= (f_1(y)-f_1(x))+(f_2(y)-f_2(x))=|f(x)-f(y)|.
\]
and that the $|\gamma_f|$ measure of a point is zero due to the continuity  of $f_1$ and $f_2$. Using the tree structure of $\bar V_n$ one obtains a finite measure $\gamma_f$ on $\mathcal{S}=\cup_n \bar V_n$ (and two continuous functions $f_1$ and $f_2$)  such that for every $n \ge 0$ and  every adjacent $u,v \in V_n$ with $u \le v$,
\[
f(v)-f(u)=\gamma_f (\mathbf{e}(u,v)).
\]
Moreover, if $A \subset K$ is convex we have for every $m \ge 0$
\[
|\gamma_f| (A \cap \bar{V}_m)\le \mathcal{E}_{A\cap \bar{V}_m ,1} (f) \le  \mathcal{E}_{A ,1} (f).
\]
This implies $|\gamma_f| (A \cap \mathcal S) \le  \mathcal{E}_{A ,1} (f)$ by letting $m\to \infty$.
\end{proof}

\subsection{Weak gradients}

\begin{defn}\label{defW11}
We define $W^{1,1}(K) \subset \mathcal F_1$ to be the set of $f \in C(K)$ such that there exists $g \in L^1( \mathcal S, \nu)$ such that for every adjacent $u,v \in V_n$ with $u \le v$,
\[
f(v)-f(u)=\int_{\mathbf{e}(u,v)} g \,  d\nu.
\] 
Such $g$ is then unique and the semi-norm on $W^{1,1}(K)$ is defined by
\[
\| f \|_{W^{1,1}(K)} =\int_\mathcal{S} | g | d\nu.
\]
\end{defn}

\begin{remark}
The inclusion $W^{1,1}(K) \subset \mathcal F_1$ is strict. Indeed,  consider a continuous function $f:[0,1]\to \mathbb R$ of bounded variation   which is not absolutely continuous with respect to the Lebesgue measure (like the so-called devil staircase). Consider now the unique continuous function $g$ on $K$ such that
\[
g(x,x)=f( \sqrt{2} x), \, x\in [0,\sqrt{2}/2]
\]
and such that $g$ is, for every $n$, constant on any connected component of $\bar{V}_n \setminus \mathbf{e}$ where $\mathbf{e}$ is the edge $( \sqrt{2} x, \sqrt{2}x), \, x\in [0,\sqrt{2}/2]$.  Then $g$ is in $\mathcal{F}_1$ but not $W^{1,1}(K)$.
\end{remark}

\begin{remark}
We use the notation $W^{1,1}(K)$ because that space appears as the endpoint of the real interpolation scale $W^{1,p}(K)$, $1<p\le+\infty$, see Theorem \ref{real inter sobo}.
\end{remark}

\begin{defn}
Let $1 \le  p \le +\infty$. For $f \in \mathcal{F}_{p}$ if $p >1$, and $f \in W^{1,1}(K)$ if $p=1$,  we will denote by $\partial f$ the unique function in $L^p(\mathcal S,\nu)$ such that for every $n$ and  for every adjacent $u,v \in V_n$ with $u \le v$
\[
f(v)-f(u)=\int_{\mathbf{e}(u,v)} \partial f \,  d\nu.
\] 
\end{defn}

\begin{remark}
It is easy to see that if $q_5=(0,0)$ is the center of $K$ and if $x \in V_m$, then for $f \in \mathcal{F}_{p}$
\[
f(x)-f(q_5)=\int_{\gamma_m(q_5,x)} \partial f \,  d\nu,
\]
where we recall that $\gamma_m(q_5,x)$ is the geodesic path in $\bar{V}_m$ connecting $q_5$ to $x$.
\end{remark}

\begin{remark}
The operator $\partial$ is defined modulo the orientation on $\mathcal S$ determined by the order $\le$ on pair of adjacent vertices in $V_m$. However  $| \partial f|$ is independent from the choice of orientation.
\end{remark}

\begin{remark}\label{sobolev cable system}
The set $\bar{V}_m$ is a cable system.  As such, see for instance Section 5.1 in  \cite{BK}, one can see any continuous function on $\bar{V}_m$ as a finite collection of functions $(f_\mathbb{e})_{\mathbf{e} \in E_m}$ where $E_m$ is the set of edges of $\bar{V}_m$ and $f_\mathbf{e}: \left[0, 3^{-m} \right] \to \mathbb{R}$ is a continuous functions (with the  appropriate boundary conditions). Then, it is easy to see that for $f \in \mathcal{F}_p$, $1 < p \le +\infty$ (or $f \in W^{1,1}(K)$ if $p=1$), denoting $f^m=f_{/\bar{V}_m}$, we have for all $\mathbf{e}$ in $E_m$, $f^m_\mathbf{e} \in W^{1,p}\left(\left[0, 3^{-m} \right]\right)$, where for an interval $I \subset \mathbb{R}$, $W^{1,p}(I)$ is the  usual $(1,p)$ Sobolev space. Note then that for $p=2$ our operator $\partial$ is similar to the exterior derivative considered in \cite{BK}. Thus, with the terminology of \cite{BK} one can see $L^p(\mathcal S, \nu)$, $1\le p \le+\infty$ as the set of $p$-integrable one-forms on $K$.
\end{remark}

\begin{remark}\label{scaling gradient}
Let $1 \le p \le +\infty$. It is clear that for $f \in \mathcal F_p$ for $p>1$ (or $f \in W^{1,1}(K)$ if $p=1$) and $w \in W_m$, we have $f \circ \Psi_w \in \mathcal F_p$ with
\[
\partial (f \circ \Psi_w)= 3^{-m} (\partial f  ) \circ \Psi_w.
\]
\end{remark}

\begin{remark}
For $p=2$, $\mathcal F_2$ is the domain of the standard self-similar Dirichlet form  $\mathcal E_2$ on $K$ and from the previous result one has
\[
\mathcal E_2 (f)=\int_{\mathcal S} | \partial f |^2 d\nu, \quad f \in \mathcal F_2.
\]
The measure $\nu$ is  a minimal energy dominant measure in the  sense of Hino \cite{MR2578475}.
\end{remark}

\subsection{Poincar\'e inequalities}

In this section we prove the Poincar\'e inequalities using Morrey type estimates.

\begin{thm}[Morrey type estimate]\label{thm:Morrey}
Let $A \subset K$ be a closed convex set. Let $1 \le p <+\infty$. For every $f \in \mathcal{F}_p$ and $x,y \in A$
\[
|f(x)-f(y)|^p \le d(x,y)^{p-1} \mathcal{E}_{A,p} (f).
\]
\end{thm}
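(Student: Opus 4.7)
My plan is to reduce the inequality to pairs of vertices in $V_n \cap A$ via a direct Hölder argument along the unique path connecting them in $\bar V_n\cap A$, then extend to arbitrary points of $A$ by continuity and density.

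First, I consider $x,y\in V_n\cap A$ for some $n\ge 0$. Since $A$ is convex and $\bar V_n$ carries a tree structure, there is a unique simple path $\gamma_n(x,y)\subset \bar V_n\cap A$ consisting of a chain of edges $\mathbf{e}(u_i,u_{i+1})$ between adjacent vertices with $u_0=x$, $u_N=y$, whose total $\nu$-length equals the geodesic distance $d(x,y)$. For $1<p<\infty$, Theorem \ref{thm:charactW1p} produces $\partial f \in L^p(\mathcal S,\nu)$ such that $f(u_{i+1})-f(u_i)=\int_{\mathbf{e}(u_i,u_{i+1})}\partial f\, d\nu$ (up to the orientation convention on each edge), and telescoping yields
\[
f(y)-f(x)=\int_{\gamma_n(x,y)} \partial f\, d\nu.
\]
Hölder's inequality together with the identity $\mathcal E_{A,p}(f)=\int_{A\cap\mathcal S}|\partial f|^p\, d\nu$ from Theorem \ref{thm:charactW1p} then gives
\[
|f(y)-f(x)|^p\le \nu(\gamma_n(x,y))^{p-1}\int_{\gamma_n(x,y)}|\partial f|^p\, d\nu\le d(x,y)^{p-1}\mathcal E_{A,p}(f),
\]
which is the desired bound. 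For $p=1$ the argument is the same in spirit, but uses Theorem \ref{thm:charactW11} instead: the signed measure $\gamma_f$ provides $|f(y)-f(x)|\le |\gamma_f|(\gamma_n(x,y))\le |\gamma_f|(A\cap\mathcal S)=\mathcal E_{A,1}(f)$, which matches the statement with $d(x,y)^0=1$.

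To pass from vertex pairs to arbitrary $x,y\in A$, I use that convexity of $A$ forces the geodesic $\gamma(x,y)$ in $K$ to lie entirely in $A$. For each sufficiently large $n$, one can then pick vertices $x_n,y_n\in V_n\cap A$ along this geodesic with $x_n\to x$, $y_n\to y$ and $d(x_n,y_n)\to d(x,y)$. Since $f\in C(K)$, applying the vertex-case inequality to $(x_n,y_n)$ and letting $n\to\infty$ delivers the bound for $(x,y)$.

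The main care is required in this last approximation step: the tree convexity of $A$ is essential to guarantee that the approximating vertices remain inside $A$, so that the local energy $\mathcal E_{A,p}(f)$ (rather than the global $\mathcal E_{K,p}(f)$) controls the difference. This locality will be crucial for the Poincaré inequalities that follow.
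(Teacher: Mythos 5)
Your proof is correct and follows essentially the same route as the paper's: integrate $\partial f$ (or the measure $\gamma_f$ when $p=1$) along the geodesic path in $\bar V_n\cap A$, apply H\"older's inequality using $\nu(\gamma_n(x,y))=d(x,y)$ together with $\mathcal E_{A,p}(f)=\int_{A\cap\mathcal S}|\partial f|^p\,d\nu$, and conclude by density of the vertices and continuity of $f$. Your extra care in keeping the approximating vertices inside the convex set $A$ is a welcome refinement of the final limiting step, which the paper states more tersely.
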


\begin{proof}
We first assume $p >1$. Let $x,y \in (\cup_{n} V_n) \cap A$. We can find $m$ large enough so that $x,y \in V_m$. We have then from H\"older's inequality
\begin{align*}
|f(x)-f(y)| & \le  \int_{\gamma_m(x,y)} | \partial f | d\nu 
  \le d(x,y)^{1-\frac{1}{p}} \left( \int_{\gamma_m(x,y)} | \partial f |^p  d\nu\right)^{1/p} \\
 & \le d(x,y)^{1-\frac{1}{p}} \left( \int_{A \cap \mathcal S} | \partial f |^p  d\nu\right)^{1/p},
\end{align*}
where $\gamma_m(x,y)$ denotes the  geodesic path connecting $x$ and $y$ in $V_m$. 
Therefore, for every  $x,y \in (\cup_{n} V_n) \cap A$
\[
|f(x)-f(y)|^p \le d(x,y)^{p-1} \mathcal{E}_{A,p} (f).
\]
Since $\cup_{n} V_n$ is dense in $K$, the result follows by the continuity of $f$. For $p=1$ the proof is similar by using Theorem \ref{thm:charactW11} so the details are left to the reader.
\end{proof}

\begin{cor}\label{Poincare convex}
Let $A \subset K$ be a closed convex set. Let $1 \le p <+\infty$. For every $f \in \mathcal{F}_p$, and $x,y \in A$, there holds
\[
\fint_{A} \left|f(x)-\fint_Afd\mu\right|^p d\mu(x) 
\le \mathrm{diam}(A)^{p-1}\mathcal E_{A,p}(f).
\]
In particular, for any ball $B(x_0,r)\subset K$
\begin{align}\label{Poinc1}
\int_{B(x_0,r)} \left|f(x)-\fint_{B(x_0,r)}fd\mu\right|^p d\mu(x) 
\le C r^{p\alpha_p} \mathcal E_{B(x_0,r),p}(f).
\end{align}
\end{cor}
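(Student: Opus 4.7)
The plan is to start from the pointwise Morrey bound of Theorem \ref{thm:Morrey} and reduce the Poincar\'e-type estimate to it by a standard Jensen argument. Since $\mu(A)^{-1}\int_A d\mu(y)=1$, for every $x\in A$ we can write
\[
f(x)-\fint_A f\,d\mu=\fint_A (f(x)-f(y))\,d\mu(y),
\]
so convexity of $t\mapsto |t|^p$ gives
\[
\left|f(x)-\fint_A f\,d\mu\right|^p\le \fint_A |f(x)-f(y)|^p\,d\mu(y).
\]

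Integrating over $x\in A$ and using Fubini, the first claim then reduces to showing
\[
\fint_A\fint_A |f(x)-f(y)|^p\,d\mu(y)\,d\mu(x)\le \mathrm{diam}(A)^{p-1}\,\mathcal E_{A,p}(f).
\]
This is an immediate consequence of Theorem \ref{thm:Morrey}: since $A$ is closed and convex and $f\in\mathcal F_p$, for all $x,y\in A$ one has $|f(x)-f(y)|^p\le d(x,y)^{p-1}\mathcal E_{A,p}(f)\le \mathrm{diam}(A)^{p-1}\mathcal E_{A,p}(f)$, and the bound survives the double averaging.

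For the second assertion, I would take $A=B(x_0,r)$, which is convex as noted in the discussion preceding \eqref{eq:increaseEn}. Multiplying the first inequality by $\mu(B(x_0,r))$ and using $\mathrm{diam}(B(x_0,r))\le 2r$ together with the Ahlfors upper bound $\mu(B(x_0,r))\le C r^{d_h}$ yields
\[
\int_{B(x_0,r)}\left|f(x)-\fint_{B(x_0,r)}f\,d\mu\right|^p d\mu(x)\le C\, r^{d_h}(2r)^{p-1}\mathcal E_{B(x_0,r),p}(f),
\]
and the exponent on $r$ simplifies as $p-1+d_h=p\bigl(1-\tfrac{1}{p}+\tfrac{d_h}{p}\bigr)=p\alpha_p$, giving the stated inequality. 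There is no genuine obstacle here: the whole corollary is a soft consequence of Theorem \ref{thm:Morrey} combined with Jensen's inequality and Ahlfors regularity; the only small care is to make sure the diameter and measure of the ball are controlled by $r$ with constants independent of $x_0$, which is exactly what Ahlfors regularity of $(K,d,\mu)$ provides.
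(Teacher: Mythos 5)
Your argument is correct and is essentially the paper's own proof: average out the mean via Jensen/H\"older, bound $|f(x)-f(y)|^p$ by $\mathrm{diam}(A)^{p-1}\mathcal E_{A,p}(f)$ using the Morrey estimate of Theorem \ref{thm:Morrey}, and then invoke Ahlfors regularity and $p-1+d_h=p\alpha_p$ for the ball case. No issues.
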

\begin{proof}
Applying H\"older's inequality and Theorem \ref{thm:Morrey}, we have
\begin{align*}
\int_A \left|f(x)-f_A\right|^p d\mu(x)
 & \leq 
\frac1{ \mu(A)^{p}} \int_{A} \left |\int_{A} (f(x)-f(y))d\mu(y)\right |^p d\mu(x)
\\ & \leq  
\frac1{ \mu(A)} \int_A \int_{A} |f(x)-f(y)|^p d\mu(y) d\mu(x)
\\ & \le
\mu(A) \mathrm{diam}(A)^{p-1}\mathcal E_{A,p}(f).
\end{align*}
The second inequality immediately follows from the $d_h$-Ahlfors regular property of $K$.
\end{proof}

\begin{remark}\label{sharp poinc}
It is worth noting that the exponent $p\alpha_p$ in the Poincar\'e inequality \eqref{Poinc1} is sharp. Indeed, consider the 0-piecewise affine function $f$ such that
\[
f(q_2)=f(q_4)=1, \, f(q_1)=f(q_3)=-1
\]
and $f(q_5)=0$. Then, by symmetry, for every $r>0$, $\int_{B(q_5,r)} f d\mu =0$. On the other hand for every $n \ge 0$,
\[
\int_{3^{-n}K} |f|^p d\mu=5^{-n}3^{-np}\int_{K} |f|^p d\mu
\]
and
\[
\int_{3^{-n}K\cap \mathcal S} |\partial f|^p d\nu=3^{-n} \int_{ \mathcal S} |\partial f|^p d\nu.
\]
Therefore, for $r=3^{-n}$, we have $\int_{B(q_5,r)} |f|^p d\mu \simeq r^{p\alpha_p} \int_{B(q_5,r) \cap \mathcal S} |\partial f|^p d\nu$ when $r \to 0$.
\end{remark}

\paragraph{Proving Poincar\'e inequalities using discrete approximations}

\

\

\noindent It is possible to give a second proof of Theorem \ref{thm:Morrey} and thus of Corollary \ref{Poincare convex} using  discrete approximations on $V_m$ as in  \cite{Chen} and then taking the limit when $m \to \infty$. Such an approach would be  natural in the context  of more general nested fractals. For completeness, we sketch  the argument (mostly adapted from \cite{Chen}). 

Let $A$ be a closed and convex set and $f \in \mathcal{F}_p$, $1 \le p <+\infty$. For any edge $e$ in $V_m$, denote by $e_+$ and $e_-$ its two vertices. Then, for $x,y \in A \cap V_m$,
\begin{equation}\label{eq:chain}
|f(x)-f(y)|\le \sum_{e\in \gamma_m(x,y)}|f(e_+)-f(e_-))|,
\end{equation}
where $\gamma_m(x,y)$ is the geodesic path connecting $x$ and $y$ in $V_m$. 
In addition, denote by $|\gamma_m(x,y)|$ the number of edges in $\bar{V}_m$ for the path $\gamma_m(x,y)$. Then we note that from the structure of the Vicsek set, 
\begin{equation}\label{eq:chainNo}
|\gamma_m(x,y)|= 3^m d(x,y).
\end{equation}
The above estimate and H\"older's inequality give that
\begin{align*}
|f(x)-f(y)| 
& \leq  \sum_{e \in \gamma_m(x,y)}|f(e_+)-f(e_-)|
 \leq  
|\gamma_m(x,y)|^{1-\frac1p} \Bigg(\sum_{e \in \gamma_m(x,y)}|f(e_+)-f(e_-)|^p \Bigg)^{\frac1p}
\\ & \le 
d(x,y)^{1-\frac1p} \Bigg(3^{m(1-p)}\sum_{v,w\in A \cap V_m, v\sim w}|f(v)-f(w)|^p\Bigg)^{\frac1p}
\\ &=d(x,y)^{1-\frac1p} \mathcal{E}_{A,p}^m(f)^{\frac1p}.
\end{align*}
Now, for general $x,y \in A$, we pick  sequences $x_m,y_m \in V_m$ such that $x_m \to x$ and $y_m \to y$ and let $m\to +\infty $ in the previous inequality thanks to the continuity of $f$.


\section{Korevaar-Schoen-Sobolev and Haj\l{}asz-Sobolev spaces}

\subsection{Comparison of the discrete and Korevaar-Schoen $p$-energies}

In this section, we  compare the $L^p$ Korevaar-Schoen energy (see Definition \ref{def:p-KSS}) and the $p$-energy defined from the limit approximation of discrete $p$-energy (see Definition \ref{def:p-energyVS}). 

\begin{prop}\label{prop:comparison1}
Let  $1<p <+\infty$.  There exist constants $c,C>0$ such that for every $f \in C(K)$, $x_0 \in K$, and $r >0$
\[
\mathcal{E}_{B(x_0,r),p} (f)  \le C  \| f \|^p_{W^{1,p}(B(x_0,cr))}.
\]
In particular, if $f \in W^{1,p}(K)$ then $\mathcal{E}_{p} (f)<+\infty$ and thus $f \in \mathcal F_p$.
\end{prop}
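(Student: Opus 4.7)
My plan is to bound $\mathcal{E}^m := \mathcal{E}^m_{B(x_0,r),p}(f)$ uniformly for $m$ large enough and then conclude by the monotonicity \eqref{eq:increaseEn} together with \eqref{eq:lim}. We may assume $f \in W^{1,p}(K)$, so $f \in C(K)$ by Remark~\ref{rem:continuity}. The key idea is to introduce the ball-averaged function $f_s(x) := \fint_{B(x,s)} f\,d\mu$ at the scale $s = c_0\cdot 3^{-m}$ with $c_0 \in (0,1/2)$ chosen so that the balls $\{B(u,s)\}_{u\in V_m}$ are pairwise disjoint, and to split
\[
\mathcal{E}^m \le 2^{p-1}\,\mathcal{E}^m(f_s) + C_p \cdot 3^{(p-1)m}\!\!\sum_{u\in V_m\cap B(x_0,r)}\!\!|f(u)-f_s(u)|^p,
\]
estimating the two terms separately.

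\paragraph{Smoothed-energy term.} For the first term, Jensen's inequality gives $|f_s(u)-f_s(v)|^p \le \fint_{B(u,s)}\!\fint_{B(v,s)}|f(x)-f(y)|^p\,d\mu(x)\,d\mu(y)$. Since $d(x,y) \le 3^{-m} + 2s \le \tfrac{3}{2}\cdot 3^{-m}$ on the domain of integration, and since $d_h$-Ahlfors regularity gives $\mu(B(u,s))\simeq s^{d_h}$, summing over edges using the disjointness of the $B(u,s)$'s and the bounded vertex degree yields
\[
\sum_{u\sim v}|f_s(u)-f_s(v)|^p \le \frac{C}{s^{2d_h}}\int_{B(x_0,r+s)}\!\int_{B(x,\tfrac32\, 3^{-m})}\!|f(x)-f(y)|^p\,d\mu(y)\,d\mu(x).
\]
Multiplying by $3^{(p-1)m}$ and using the identity $p\alpha_p+d_h = p-1+2d_h$, the factor $3^{(p-1)m}/s^{2d_h}$ precisely balances the natural scaling $(3^{-m})^{p\alpha_p + d_h}$ of the double integral. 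For $m$ large this double integral is controlled by the KS semi-norm at scale $\tfrac32\cdot 3^{-m}$ restricted to $B(x_0,cr)$ (for any fixed $c>1$, say $c = 2$), producing $\mathcal{E}^m(f_s) \le C\|f\|^p_{W^{1,p}(B(x_0,cr))}$.

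\paragraph{Error term via dyadic telescoping.} For the second term, I would telescope along the dyadic scales $t_j := s\cdot 2^{-j}$, writing $f(u) - f_s(u) = -\sum_{j\ge 0}(f_{t_{j+1}}(u) - f_{t_j}(u))$ (valid by continuity, since $f_{t}(u) \to f(u)$ as $t \to 0^+$), and apply Hölder's inequality with weights $t_j^{-\gamma}$ for a fixed $\gamma \in (0,p-1)$ to get
\[
|f(u)-f_s(u)|^p \le C_\gamma\, s^\gamma \sum_{j\ge 0} t_j^{-\gamma}\,|f_{t_{j+1}}(u)-f_{t_j}(u)|^p.
\]
The same Jensen/Ahlfors/disjointness argument as above, now applied at scale $\tfrac32\, t_j$ (the $B(u,t_j)$'s remain disjoint since $t_j\le s$), bounds $\sum_{u\in V_m\cap B(x_0,r)}|f_{t_{j+1}}(u)-f_{t_j}(u)|^p$ by $C\,t_j^{p-1}\|f\|^p_{W^{1,p}(B(x_0,cr))}$. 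Interchanging the sums over $j$ and $u$ and summing the geometric series $\sum_j t_j^{p-1-\gamma} \simeq s^{p-1-\gamma}$ (which converges because $\gamma < p-1$) would give $\sum_u |f(u)-f_s(u)|^p \le C\,s^{p-1}\|f\|^p_{W^{1,p}(B(x_0,cr))}$; the $m$-dependence then cancels after multiplication by $3^{(p-1)m}$ and the substitution $s = c_0\cdot 3^{-m}$.

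\paragraph{Main obstacle.} The delicate step is the choice of the Hölder exponent $\gamma$: one simultaneously needs $\gamma>0$ (so that the $\ell^{q}$-side of Hölder's inequality converges) and $\gamma < p-1$ (so that the later geometric series $\sum_j t_j^{p-1-\gamma}$ converges). The interval $(0,p-1)$ being nonempty is precisely what restricts the argument to $p>1$ and explains why the case $p=1$ must be handled separately. A secondary technical point is that the KS bound at the dyadic scales $\tfrac32\,t_j$ must hold uniformly in $j\ge 0$; since $t_j \le s = c_0\, 3^{-m} \to 0$ as $m\to\infty$, all these scales lie in the small-$r'$ regime where the $\limsup$ in Definition~\ref{def:p-KSS} is effective, and the finitely many small-$m$ cases are then controlled by the monotonicity \eqref{eq:increaseEn}.
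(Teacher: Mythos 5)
Your argument is correct, and it takes a genuinely different route from the paper's. The paper follows the Hu--Ji--Wen template: for each edge of $V_m$ it averages over the containing $m$-simplex $K_w$, and then controls $\fint_{K_w}|f(x)-f(u)|^p d\mu(u)$ for a vertex $x$ of $K_w$ by telescoping along a chain of shrinking $(m+j)$-simplices $S_j \ni x$, inserting the convexity weights $2^{(p-1)(j+1)}$ directly; convergence of the resulting series rests on $(2/3)^{p-1}<1$. You instead mollify by ball averages $f_s$ at scale $s\simeq 3^{-m}$, handle the smoothed energy in one step by Jensen, and telescope the pointwise error $f(u)-f_s(u)$ over dyadic ball averages with a weighted H\"older inequality, the weight exponent $\gamma\in(0,p-1)$ playing the role of the paper's geometric factor. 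Both proofs are at bottom chaining arguments over shrinking scales that exploit the identity $p\alpha_p+d_h=p-1+2d_h$ and fail at $p=1$, and both conclude identically by bounding $\mathcal E^m_{B(x_0,r),p}(f)$ by $\sup_{\rho\le C3^{-m}}$ of the normalized Korevaar--Schoen functional and invoking \eqref{eq:lim} to pass to the $\limsup$ as $m\to\infty$. What your version buys is independence from the self-similar cell structure: it uses only Ahlfors regularity, the $3^{-m}$-separation and bounded degree of $V_m$, and continuity of $f$ (for $f_t(u)\to f(u)$), so it would transfer to other graph approximations of a metric measure space; it also makes the role of $p>1$ completely transparent. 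The paper's version stays inside the simplex combinatorics and avoids introducing the mollified functions. Two small points to tighten: the splitting of $f(u)-f(v)$ into three differences gives the constant $3^{p-1}$ rather than $2^{p-1}$ (immaterial), and your bound $\sum_u|f_{t_{j+1}}(u)-f_{t_j}(u)|^p\le Ct_j^{p-1}\|f\|^p_{W^{1,p}(B(x_0,cr))}$ should, for fixed $m$ and $j$, be stated with $\sup_{\rho\le \frac32 s}$ of the scale-$\rho$ functional in place of the $\limsup$-defined seminorm, the passage to the seminorm occurring only in the final limit $m\to\infty$ --- exactly as you indicate in your closing paragraph, and exactly as the paper itself does.
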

\begin{proof}
We  use a strategy found in the proof of \cite[pages 108-110]{HuJiWen}. The method in that paper deals with the Sierpinski gasket, but it can be applied as well to the Vicsek set modulo appropriate modifications.  For a fixed ball $B(x_0,r)\subset K$ with $r \le 2$, let $n_0 \ge 0$ be such that $ 2 \cdot 3^{-n_0-1} < r \le 2 \cdot 3^{-n_0}$. From now on we assume that  $m>n_0$. Notice that for any $x,y \in V_m$ which are neighbors, there exists a unique $m$-simplex $K_w$ such that $x,y\in K_w$. In this case, we also have $x,y\in V_m^w$. By the basic convexity inequality, 
\[
|f(x)-f(y)|^p \le 2^{p-1} \left( |f(x)-f(u)|^p+|f(u)-f(y)|^p\right)
\]
one has 
\begin{equation}\label{eq:convex}
|f(x)-f(y)|^p\le \frac{2^{p-1}}{\mu(K_w)}\int_{K_w} \left(|f(x)-f(u)|^p+|f(u)-f(y)|^p\right) d\mu(u).
\end{equation}
In order to estimate $\mathcal{E}_{B(x_0,r),p}^m(f)$, we denote 
\[
\mathcal I_m=\{w\in W_m: \, \exists \, x,y\in  V^w_m \cap B(x_0,r)  \text{ such that } x \sim y\}.
\]
Observe that there exists a constant $c>1$ ($c=2$ will do) such that $\cup_{w\in \mathcal I_m} K_w \subset B(x_0,cr)$.
By \eqref{eq:convex}, one has
\[
\mathcal{E}_{B(x_0,r),p}^m(f) 
\le C  3^{m(p-1)}\sum_{w\in \mathcal I_m}\sum_{x\in V_m^w}\frac{1}{\mu(K_w)}\int_{K_w} |f(x)-f(u)|^p d\mu(u).
\]

Now let $x\in V_m^w$ be fixed. There exists a sequence of sets $\{S_j\}_{j\ge 0}$ which shrinks to $x$ and where $S_j$ is an $(m+j)$-simplex. Indeed, take $i_0\in W$ such that $q_{i_0}\in V_0$ is the vertex satisfying $x=\Psi_w(q_{i_0})$. We set 
\[
S_0=K_w, \quad S_1=\Psi_{w}\circ \psi_{i_0}(K), \quad \cdots, \quad S_j=\Psi_{w}\circ \underbrace{\psi_{i_0}\circ \cdots \circ \psi_{i_0}}_\text{ j times}(K).
\]
Then one observes that $x\in S_j$ for every $j\ge 0$ and that the sequence $\{S_j\}_{j\ge 0}$ shrinks to the vertex $x$. Now for every $u_0:=u\in S_0$, $u_j\in S_j$ for $j>0$ and $l\ge1$,  we have that 
\begin{align*}
|f(x)-f(u)|^p &\le 2^{p-1}\left(|f(x)-f(u_l)|^p+|f(u_l)-f(u)|^p\right)
\\ &\le 2^{p-1}|f(x)-f(u_l)|^p+\sum_{j=1}^l 2^{(p-1)(j+1)} |f(u_j)-f(u_{j-1})|^p.
\end{align*}
Integrating the above inequality with respect to each $u_j\in S_j$ ($0\le j\le l$) and dividing by $\mu(S_0)\mu(S_1)\cdots \mu(S_l)$, we then obtain
\begin{multline*}
\frac{1}{\mu(K_w)}\int_{K_w} |f(x)-f(u)|^p d\mu(u) 
\le \frac{2^{p-1}}{\mu(S_l)}\int_{S_l} |f(x)-f(u_l)|^pd\mu(u_l)
\\+\sum_{j=1}^{l}\frac{2^{(p-1)(j+1)}}{\mu(S_{j-1})\mu(S_j)}\int_{S_{j-1}}\int_{S_j} |f(u_j)-f(u_{j-1})|^p d\mu(u_j)d\mu(u_{j-1}).
\end{multline*}
Since $f$ is continuous, the first term on the right hand side tends to zero as $l\to \infty$.
Next we note that $\mu(S_j)=5^{-(m+j)}$ and $d(u_j,u_{j-1})\le 2 \cdot 3^{-(m+j-1)}$ for any $u_{j-1}\in S_{j-1}, u_j\in S_j$, then for $1\le j\le l$ there holds
\begin{align*}
&\frac1{\mu(S_{j-1})\mu(S_j)}\int_{S_{j-1}}\int_{S_j} |f(u_j)-f(u_{j-1})|^p d\mu(u_j)d\mu(u_{j-1})
\\ & \quad\quad\le 5^{2m+2j-1}\int_{S_0}\int_{B(u,2 \cdot 3^{-(m+j-1)})} |f(u)-f(v)|^p d\mu(v)d\mu(u).
\end{align*}
Also, one always has $S_j\subset K_w\subset B(x_0,cr)$ for any $j\ge 0$. Therefore the second term is bounded above  by 
\begin{align*}
\sum_{j=1}^{l}2^{(p-1)(j+1)} 5^{2m+2j-1}\int_{K_w}\int_{B(u,2 \cdot 3^{-(m+j-1)})\cap B(x_0,cr)} |f(u)-f(v)|^p d\mu(v)d\mu(u).
\end{align*}
Summing up the integral above over all $w\in \mathcal I_m$ and letting $l\to \infty$, we have then 
\begin{align*}
    \mathcal{E}_{B(x_0,r),p}^m(f) \le &C 3^{m(p-1)}  \sum_{j=1}^{\infty}2^{(p-1)(j+1)} 5^{2m+2j-1}
    \\ &\int_{B(x_0,cr)}\int_{B(u,2 \cdot 3^{-(m+j-1)})\cap B(x_0,cr)} |f(u)-f(v)|^p d\mu(v)d\mu(u)
    \\ \le & C  \sum_{j=1}^{\infty}2^{(p-1)(j+1)} 3^{-(p-1)(j-1)} 
    \\ &\frac{1}{3^{-(m+j-1)(p\alpha_p+d_h)}}\int_{B(x_0,cr)}\int_{B(u,2 \cdot 3^{-(m+j-1)})\cap B(x_0,cr)} |f(u)-f(v)|^pd\mu(v)d\mu(u)
    \\ \le & C \sup_{\rho \in (0,2 \cdot 3^{-m})} \frac{1}{\rho^{p\alpha_p+d_h}}\int_{B(x_0,cr)}\int_{B(u,\rho)\cap B(x_0,cr)} |f(u)-f(v)|^pd\mu(v)d\mu(u)
\end{align*}
where the second inequality follows from the fact that $p\alpha_p+d_h=p-1+2d_h$.  In view of \eqref{eq:lim}, we thus conclude the proof by taking $\lim_{m\to \infty}$.
\end{proof}

As an immediate corollary we obtain from Corollary \ref{Poincare convex} the $L^p$-Poincar\'e inequality in the Korevaar-Schoen-Sobolev spaces.

\begin{cor}
Let $p  >1 $. Then there exist constants $c,C>0$ such that for any $f \in W^{1,p}(K)$,  $x_0\in K$ and $r>0$ we have:
\[
\int_{B(x_0,r)} |f(x)-f_{B(x_0,r)}|^p d\mu(x) 
\le Cr^{p-1+d_h} \| f \|_{W^{1,p}(B(x_0,cr)}^p.
\]
\end{cor}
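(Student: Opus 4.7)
The plan is to simply chain together the two results that have just been established, since the corollary is stated as an ``immediate'' consequence of Corollary~\ref{Poincare convex} and Proposition~\ref{prop:comparison1}. First, given $f \in W^{1,p}(K)$, Proposition~\ref{prop:comparison1} gives us $f \in \mathcal{F}_p$, so that Corollary~\ref{Poincare convex} applies: for every ball $B(x_0,r) \subset K$ we have
\[
\int_{B(x_0,r)} \left| f(x) - \fint_{B(x_0,r)} f\, d\mu \right|^p d\mu(x) \le C\, r^{p\alpha_p}\, \mathcal{E}_{B(x_0,r),p}(f).
\]

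Next, I would bound the discrete energy $\mathcal{E}_{B(x_0,r),p}(f)$ on the right-hand side by invoking Proposition~\ref{prop:comparison1}, which produces constants $c,C>0$ (independent of $f$, $x_0$ and $r$) such that
\[
\mathcal{E}_{B(x_0,r),p}(f) \le C\, \| f \|^p_{W^{1,p}(B(x_0,cr))}.
\]
Substituting this back yields
\[
\int_{B(x_0,r)} \left| f(x) - f_{B(x_0,r)} \right|^p d\mu(x) \le C\, r^{p\alpha_p}\, \| f \|^p_{W^{1,p}(B(x_0,cr))},
\]
with possibly a larger constant $C$.

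Finally, I would unwind the definition of $\alpha_p$: since $\alpha_p = 1 - \tfrac{1}{p} + \tfrac{d_h}{p}$, one has $p\alpha_p = p - 1 + d_h$, which is exactly the exponent appearing in the statement. There is essentially no obstacle here, as both ingredients have been proved: the only point to verify is that the constant $c$ in the comparison of energies can be taken uniform in $x_0$ and $r$, which is already built into the statement of Proposition~\ref{prop:comparison1}.
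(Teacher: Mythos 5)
Your proof is correct and is precisely the argument the paper intends: the corollary is stated as an immediate consequence of chaining Corollary \ref{Poincare convex} (Poincar\'e inequality with the discrete energy $\mathcal{E}_{B(x_0,r),p}(f)$ on the right) with Proposition \ref{prop:comparison1} (which bounds that energy by $\|f\|^p_{W^{1,p}(B(x_0,cr))}$ uniformly in $x_0$ and $r$), together with the identity $p\alpha_p = p-1+d_h$. The only implicit point, which you handle correctly, is that $f\in W^{1,p}(K)$ has a continuous representative (Remark \ref{rem:continuity}) so that both prior results apply.
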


\begin{remark}
For the Vicsek set,  $L^p$-Poincar\'e inequalities in the Korevaar-Schoen-Sobolev spaces were obtained in \cite{BC2020} for the range $1 \le p \le 2$. The inequalities in \cite{BC2020} are actually stronger since we used on the right hand side the functional
\[
\liminf_{r\to 0^+} \frac{1}{r^{p\alpha_p}} \int_A\int_{B(x,r) \cap A}\frac{|f(y)-f(x)|^p}{ \mu(B(x,r))}\, d\mu(y)\, d\mu(x)
\]
instead of $\| f \|_{W^{1,p}(A)}$ (which is defined using a $\limsup$). However, the techniques in \cite{BC2020} do not apply for $p \ge 2$.
\end{remark}

For the comparison of the reverse direction, we have in fact the following stronger statement.

\begin{prop}\label{prop:comparison2}
Let  $1 \le p<\infty$.  There exists constants $c,C>0$ such that for every $f \in \mathcal F_p$, $x_0 \in K$, and $r >0$
\[
\sup_{R>0}\frac{1}{R^{p\alpha_p}}\int_{B(x_0,r)}\int_{B(x_0,r)\cap B(x,R)}\frac{|f(x)-f(y)|^p}{\mu(B(x,R))} d\mu(y) d\mu(x) \le C \mathcal{E}_{B(x_0,cr),p}(f).
\]
In particular, for $1<p<+\infty$,  $\mathcal F_p \subset W^{1,p}(K)$.
\end{prop}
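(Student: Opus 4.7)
My plan is to sharpen the Morrey estimate of Theorem \ref{thm:Morrey} by tracking the weak gradient along the actual geodesic rather than over all of $A\cap\mathcal S$, and then to apply Fubini twice, leveraging the dendritic structure of $K$. Fix $f\in\mathcal F_p$ and let $\partial f\in L^p(\mathcal S,\nu)$ be the weak gradient given by Theorem \ref{thm:charactW1p} (for $p=1$, replace $|\partial f|^p d\nu$ everywhere by the total variation measure $|\gamma_f|$ from Theorem \ref{thm:charactW11}; the argument is otherwise unchanged). I claim that $c=1$ suffices.

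The first step is the refined pointwise estimate
\[
|f(x)-f(y)|^p \le d(x,y)^{p-1}\int_{\gamma(x,y)} |\partial f|^p\, d\nu, \qquad x,y\in K,
\]
where $\gamma(x,y)$ is the unique geodesic joining $x$ and $y$ in the dendrite $K$. This follows from the same H\"older computation used in Theorem \ref{thm:Morrey}, first for $x,y\in\bigcup_n V_n$ via $f(y)-f(x)=\int_{\gamma_m(x,y)} \partial f\, d\nu$, then extended by density and continuity of $f$. When $x,y\in B(x_0,r)$, ball convexity gives $\gamma(x,y)\subset B(x_0,r)\cap\mathcal S$ and $d(x,y)\le\min(R,2r)$ whenever $y\in B(x_0,r)\cap B(x,R)$.

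Substituting into the double integral and applying Fubini to swap the geodesic integration with the $\mu$-integrations yields
\[
I := \int_{B(x_0,r)}\int_{B(x_0,r)\cap B(x,R)} |f(x)-f(y)|^p\, d\mu(y)\,d\mu(x) \le \min(R,2r)^{p-1}\int_{B(x_0,r)\cap\mathcal S} |\partial f(z)|^p\, (\mu\otimes\mu)(G_z)\, d\nu(z),
\]
where $G_z=\{(x,y): x,y\in B(x_0,r),\, d(x,y)\le R,\, z\in\gamma(x,y)\}$. The dendrite property forces $d(x,z)+d(z,y)=d(x,y)\le R$ whenever $z\in\gamma(x,y)$, so both projections of $G_z$ sit inside $B(x_0,r)\cap B(z,R)$, and Ahlfors regularity yields the crucial volume estimate
\[
(\mu\otimes\mu)(G_z) \le \mu\bigl(B(x_0,r)\cap B(z,R)\bigr)^2 \le C\min(r,R)^{2d_h}.
\]
Combining these gives $I\le C\min(R,2r)^{p-1}\min(r,R)^{2d_h}\,\mathcal E_{B(x_0,r),p}(f)$.

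To conclude, divide the integrand by $\mu(B(x,R))\simeq\min(R,\mathrm{diam}\,K)^{d_h}$ and divide $I$ by $R^{p\alpha_p}=R^{p-1+d_h}$, then split into cases. For $R\le r$, both $\min$'s equal $R$ and the exponents cancel exactly. For $r<R\le\mathrm{diam}\,K$, one uses $\min(R,2r)^{p-1}\min(r,R)^{2d_h}\le (2r)^{p-1}r^{2d_h}$ together with $(r/R)^{p-1+2d_h}\le 1$. For $R>\mathrm{diam}\,K$ the denominator $R^{p\alpha_p}$ dominates the bounded numerator trivially. Each case yields $\le C\,\mathcal E_{B(x_0,r),p}(f)$, producing the claimed $\sup_{R>0}$ bound, and for $p>1$ taking $\limsup_{R\to 0}$ gives the inclusion $\mathcal F_p\subset W^{1,p}(K)$. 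The principal obstacle I anticipate is establishing the volume bound $(\mu\otimes\mu)(G_z)\le\mu(B(x_0,r)\cap B(z,R))^2$ rigorously from the dendrite property --- i.e., that $z\in\gamma(x,y)$ forces $d(x,z)+d(z,y)=d(x,y)$, so that $d(x,y)\le R$ entails $d(x,z)\le R$ and $d(z,y)\le R$ separately --- everything else being bookkeeping of scaling exponents via Ahlfors regularity and the identity $p\alpha_p=p-1+d_h$.
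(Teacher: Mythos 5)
Your argument is correct, and it takes a genuinely different route from the paper. The paper splits into two regimes: for $R\gtrsim r$ it applies the global Morrey bound $|f(x)-f(y)|^p\le d(x,y)^{p-1}\mathcal E_{B(x_0,r),p}(f)$ directly, and for $R\lesssim r$ it covers $B(x_0,r)$ by $k$-simplices at scale $R$, applies Morrey on the enlarged simplices $K_w^*$, and sums using the bounded-overlap property --- which is exactly where the enlargement constant $c>1$ enters. You instead sharpen Morrey to a path integral $|f(x)-f(y)|^p\le d(x,y)^{p-1}\int_{\gamma(x,y)}|\partial f|^p\,d\nu$ and then dualize via Tonelli, reducing the whole estimate to the volume $(\mu\otimes\mu)(G_z)$ of pairs whose geodesic passes through a fixed $z$; the dendrite identity $d(x,z)+d(z,y)=d(x,y)$ for $z\in\gamma(x,y)$ then gives $G_z\subset (B(x_0,r)\cap B(z,R))^2$ and hence the bound $C\min(r,R)^{2d_h}$, after which the exponents match up exactly as you compute ($p\alpha_p+d_h=p-1+2d_h$). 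This buys a slightly stronger statement ($c=1$, no enlarged ball on the right-hand side) and a uniform treatment of all scales $R$ with no covering argument; the price is a heavier reliance on the tree structure (unique geodesics, ball convexity, additivity of distance along geodesics), whereas the paper's covering argument only needs the crude Morrey estimate and so transfers more readily to other nested fractals. Two small points you should make explicit: (i) when extending the refined Morrey estimate from $\bigcup_n V_n$ to all of $K$, choose the approximating vertices \emph{on} the geodesic $\gamma(x,y)$ so that $\gamma(x_m,y_m)\subset\gamma(x,y)$ and the path integrals are monotone in $m$; and (ii) the Tonelli step needs the measurability of $\{(x,y,z): z\in\gamma(x,y)\}$, which follows because in a dendrite this set is closed (it is $\{d(x,z)+d(z,y)=d(x,y)\}$) and $\nu$ restricted to $B(x_0,r)\cap\mathcal S$ is $\sigma$-finite. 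Neither is a gap, just bookkeeping worth recording.
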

\begin{proof}
Without loss of generality, we take $r\le 2$.
We first assume $ R  \ge r/6$. Let $f\in \mathcal F_p$,  then
\begin{align*}
\int_{B(x_0,r)}\int_{B(x_0,r)\cap B(x,R)}\frac{|f(x)-f(y)|^p}{\mu(B(x,R))} d\mu(y) d\mu(x)
\le &    \int_{B(x_0,r)}\int_{B(x_0,r)}\frac{|f(x)-f(y)|^p}{\mu(B(x,R))} d\mu(y) d\mu(x).
\end{align*}
From   Theorem \ref{thm:Morrey}, we have $|f(x)-f(y)|^p \le  C r^{p-1} \mathcal E_{ B(x_0,r),p}(f) $. Therefore,
\begin{align}\label{eq:tyh}
 \int_{B(x_0,r)}\int_{B(x_0,r)}\frac{|f(x)-f(y)|^p}{\mu(B(x,R))} d\mu(y) d\mu(x) \le C r^{p-1+d_h} \mathcal E_{ B(x_0,r),p}(f)\le C R^{p\alpha_p} \mathcal E_{ B(x_0,r),p}(f),
\end{align}
and 
\begin{equation}\label{eq:bigR}
\int_{B(x_0,r)} \int_{B(x,R) \cap B(x_0,r)}\left|f(x)-f(y) \right|^p d\mu(y) d\mu(x)
\le C R^{p \alpha_p+d_h} \mathcal E_{B(x_0,r),p} (f).
\end{equation}

We then assume $0 < R  \le r/6$. 
Let $k$ be the unique integer such that  
\[
3^{-(k+1)} <R\le 3^{-k}.
\] 
Consider the covering of $B(x_0,r)$ by a collection of $k$-simplices $\{K_{w}\}_{w\in \mathcal A_k}$, where 
\[
\mathcal A_k:=\{w\in W_k:K_w\cap B(x_0,r)\ne \emptyset\}.
\]
Notice that for $x\in K_{w}$, we have that $B(x,R)\subset K_{w}^{*}$, where $K_w^*$ denotes the union of $K_w$ and all its adjacent $n$-simplices. 
Then 
\begin{align*}
&\int_{B(x_0,r)} \int_{B(x,R) \cap B(x_0,r)}\left| f(x)-f(y) \right|^p d\mu(y) d\mu(x)
\\ &\qquad\le
\sum_{w \in \mathcal A_k} \int_{K_{w}} \int_{B(x,R) \cap B(x_0,r)}\left| f(x)-f(y) \right|^p d\mu(y) d\mu(x) 
\\ 
&\qquad \le \sum_{w \in \mathcal A_k}  \int_{K_{w}} \int_{K_{w}^{*} \cap B(x_0,r)}\left| f(x)-f(y) \right|^p d\mu(y) d\mu(x). 
\end{align*}
For any $x\in K_w$ and $y\in K_{w}^{*} \cap  B(x_0,r)$, Theorem \ref{thm:Morrey}  gives
\[
\left| f(x)-f(y) \right|\le C 3^{-k(p-1)}\mathcal  E_{K_w^*,p}(f) .
\]
We also observe the following two facts:
\begin{itemize}
\item There exists a constant $c>1$ such that for any $w\in \mathcal A_k$, $K_{w}^{*} \subset B(x_0, cr)$;
\item The family $\{K_{w}^{*}\}_{w\in \mathcal A_k}$ has bounded overlap property.
\end{itemize}
Hence
\begin{align*}
&\sum_{w \in \mathcal A_k}  \int_{K_{w}} \int_{K_{w}^{*} \cap B(x_0,r)}\left| f(x)-f(y) \right|^p d\mu(y) d\mu(x) 
\\
&\qquad \le  C  \sum_{w \in \mathcal A_k}  \int_{K_{w}} \int_{K_{w}^{*} \cap  B(x_0,r)} 3^{-k(p-1)}\mathcal E_{K_w^*,p}( f) d\mu(y) d\mu(x) 
\\
&\qquad \le C 5^{-2k} 3^{-k(p-1)}\sum_{v \in \mathcal A_k}  \mathcal E_{K_w^*,p}(f)
\le C R^{p-1+2d_h}  \mathcal E_{B(x_0,cr),p}(f),
\end{align*}
and 
\begin{equation}\label{eq:smallR}
\int_{B(x_0,r)} \int_{B(x,R) \cap B(x_0,r)}\left|f(x)-f(y) \right|^p d\mu(y) d\mu(x)
\le C R^{p \alpha_p+d_h} \mathcal E_{B(x_0,cr),p} (f).
\end{equation}

We conclude from \eqref{eq:bigR} and \eqref{eq:smallR} that for $f\in \mathcal F_p$ and $R>0$ 
\begin{align*}
\frac{1}{R^{p \alpha_p+d_h}}\int_{B(x_0,r)} \int_{B(x,R) \cap B(x_0,r)}\left|f(x)-f(y) \right|^p d\mu(y) d\mu(x)
\le C  \mathcal E_{B(x_0,cr),p} (f)
\end{align*}
and the proof is finished by taking $\sup_{R>0}$ in the left side.
\end{proof}

As a consequence of Propositions \ref{prop:comparison1} and \ref{prop:comparison2}, we record the following estimate which will be a key ingredient in a next section regarding the real interpolation of the Besov spaces.

\begin{cor}\label{cor:sup-limsup}
Let $1 < p <+\infty$. There exists a constant $C>0$ such that for every $f \in W^{1,p}(K)
$
\begin{equation}\label{eq:sup-limsup}
  \sup_{r >0} \frac{1}{r^{p\alpha_p}} \int_K\int_{B(x,r) }\frac{|f(y)-f(x)|^p}{ \mu(B(x,r))}\, d\mu(y)\, d\mu(x) 
 \le  C \| f \|^p_{W^{1,p}(K)}.
\end{equation}
\end{cor}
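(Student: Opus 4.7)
The proof will be a direct combination of Propositions \ref{prop:comparison1} and \ref{prop:comparison2}, exploiting the fact that $K$ is compact of diameter $2$, so for any $x_0 \in K$ and any $R \ge 2$ we have $B(x_0, R) = K$.

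The plan is to fix an arbitrary basepoint $x_0 \in K$ and apply Proposition \ref{prop:comparison2} with $r = 2$. Since $B(x_0, 2) = K$ and since for every $x \in K$ we also have $B(x_0, 2) \cap B(x, R) = B(x, R)$, the conclusion of Proposition \ref{prop:comparison2} reads
\[
\sup_{R>0}\frac{1}{R^{p\alpha_p}}\int_{K}\int_{B(x,R)}\frac{|f(x)-f(y)|^p}{\mu(B(x,R))}\, d\mu(y)\, d\mu(x) \le C\, \mathcal{E}_{B(x_0, 2c), p}(f) = C\, \mathcal{E}_{p}(f),
\]
where the last equality uses again $B(x_0, 2c) = K$ (if $2c$ exceeds the diameter of $K$).

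Next, I would apply Proposition \ref{prop:comparison1} in exactly the same way: taking $r = 2$ and the same basepoint $x_0$, it yields
\[
\mathcal{E}_p(f) = \mathcal{E}_{B(x_0, 2), p}(f) \le C\, \|f\|_{W^{1,p}(B(x_0, 2c))}^p = C\, \|f\|_{W^{1,p}(K)}^p,
\]
since $B(x_0, 2c) = K$. The fact that $f \in W^{1,p}(K)$ guarantees $\mathcal{E}_p(f) < +\infty$, hence $f \in \mathcal{F}_p$, which is what was needed to invoke Proposition \ref{prop:comparison2} in the first place.

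Chaining these two inequalities together immediately yields \eqref{eq:sup-limsup}. There is essentially no obstacle in this argument; the only point worth checking carefully is that the monotonicity of the Korevaar-Schoen functional in the domain (namely $\|f\|_{W^{1,p}(A)} \le \|f\|_{W^{1,p}(K)}$ for $A \subset K$, which follows from the definition since both the outer domain of integration and the inner ball intersection shrink when passing to $A$) makes the dilation by the constant $c$ harmless in the final estimate.
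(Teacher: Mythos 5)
Your proof is correct and is essentially the paper's own argument: the corollary is stated there precisely as a consequence of Propositions \ref{prop:comparison1} and \ref{prop:comparison2}, obtained by taking $r=2$ so that all the balls $B(x_0,r)$, $B(x_0,cr)$ equal $K$ and chaining the two estimates. Your additional checks (that $W^{1,p}(K)\subset C(K)$ and $\mathcal{E}_p(f)<+\infty$ justify invoking Proposition \ref{prop:comparison2}) are exactly the right points to verify.
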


\subsection{Maximal functions and triviality of the Haj\l{}asz-Sobolev spaces}

Let $p > 1$. For $f \in W^{1,p}(X)$  we introduce the following maximal  function

\begin{equation}\label{eq:fnt-g}
g_f(x):=\sup_{r> 0}\frac{1}{\mu(B(x,r))^{1/p}} \left(\int_{B(x,r)} | \partial f|^p d\nu \right)^{1/p}.
\end{equation}

As in \cite{BC2020} or \cite{MR3125121} it is easy to see that for $p>1$ the maximal function  $g_f$ is weak $L^p(K,\mu)$ bounded and that the Poincar\'e inequality in Corollary \ref{Poincare convex} implies the following Lusin-H\"older estimate:

\begin{prop}
Let $p >1$. Then there exists a constant $C$ such that for every $f \in W^{1,p}(X)$,
\begin{align}\label{lusin-holder}
|f(x)-f(y)|\le C d(x,y)^{\alpha_p}(g_f(x)+g_f(y)).
\end{align}
\end{prop}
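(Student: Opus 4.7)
The plan is to establish \eqref{lusin-holder} by the classical telescoping ball argument, now routine in the metric Sobolev literature (see e.g.\ \cite{MR3125121}), adapted here using the Poincar\'e inequality of Corollary \ref{Poincare convex} and the $d_h$-Ahlfors regularity of $(K,d,\mu)$. Fix $x,y \in K$, set $r = d(x,y)$, and consider the two nested sequences of balls $B_k := B(x, 2^{-k} r)$ and $B'_k := B(y, 2^{-k} r)$, $k \ge 0$, together with the bridging ball $\widetilde B := B(x, 2r)$, which contains both $x$ and $y$. Because $f \in W^{1,p}(K) \subset C(K)$ by Remark \ref{rem:continuity}, the averages $f_{B_k}$ and $f_{B'_k}$ converge to $f(x)$ and $f(y)$ respectively as $k \to \infty$.

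The core estimate is on consecutive averages: by Jensen's inequality, the inclusion $B_{k+1} \subset B_k$, and Ahlfors regularity (which gives $\mu(B_k) \simeq \mu(B_{k+1}) \simeq (2^{-k}r)^{d_h}$), one has
\[
|f_{B_{k+1}} - f_{B_k}| \le C \left(\fint_{B_k} |f - f_{B_k}|^p \, d\mu\right)^{1/p}.
\]
Applying Corollary \ref{Poincare convex} to $B_k$, identifying $\mathcal{E}_{B(x,cr_k),p}(f) = \int_{B(x,cr_k) \cap \mathcal{S}} |\partial f|^p \, d\nu$ via Theorem \ref{thm:charactW1p}, and using $p\alpha_p = p-1+d_h$ together with Ahlfors regularity to replace $\mu(B_k)$ by $\mu(B(x,cr_k))$, yields
\[
\fint_{B_k} |f - f_{B_k}|^p \, d\mu \le C (2^{-k}r)^{p\alpha_p - d_h} \int_{B(x,c 2^{-k} r)} |\partial f|^p \, d\nu \le C (2^{-k}r)^{p\alpha_p} \, g_f(x)^p,
\]
where in the last step I invoked the very definition \eqref{eq:fnt-g} of $g_f(x)$. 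Hence $|f_{B_{k+1}} - f_{B_k}| \le C \, 2^{-k\alpha_p} r^{\alpha_p} g_f(x)$, and summing the convergent geometric series (recall $\alpha_p > 0$) gives $|f(x) - f_{B_0}| \le C r^{\alpha_p} g_f(x)$. By symmetry, $|f(y) - f_{B'_0}| \le C r^{\alpha_p} g_f(y)$.

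It remains to compare $f_{B_0}$ and $f_{B'_0}$ through the bridging ball $\widetilde B = B(x,2r)$. Since $B_0, B'_0 \subset \widetilde B$ and all three balls have comparable measure, the same Jensen + Poincar\'e + Ahlfors regularity argument (applied to $\widetilde B$ and controlled by either $g_f(x)$ or $g_f(y)$, noting that $\widetilde B \subset B(y, 3r)$) gives
\[
|f_{B_0} - f_{\widetilde B}| \le C r^{\alpha_p} g_f(x), \qquad |f_{B'_0} - f_{\widetilde B}| \le C r^{\alpha_p} g_f(y).
\]
Combining the four estimates via the triangle inequality yields \eqref{lusin-holder}.

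The only genuinely delicate point is the loss $B(x_0,r) \leadsto B(x_0,cr)$ on the right-hand side of the Poincar\'e inequality, which prevents directly identifying $(\fint_{B_k} |\partial f|^p)^{1/p}$ with $g_f(x)$; this is absorbed at each scale by using $d_h$-Ahlfors regularity to replace $\mu(B_k)$ by $\mu(B(x,c 2^{-k}r))$, at the cost of a harmless multiplicative constant independent of $k$. Everything else is bookkeeping.
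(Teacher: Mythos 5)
Your proof is correct and is exactly the standard telescoping-chain-of-balls argument that the paper invokes (by reference to \cite{BC2020} and \cite{MR3125121}) to deduce the Lusin--H\"older estimate from the Poincar\'e inequality of Corollary \ref{Poincare convex}. The one point you flag as delicate is in fact even easier than you suggest: Corollary \ref{Poincare convex} has no enlargement of the ball on the right-hand side (the energy $\mathcal E_{B,p}(f)=\int_{B\cap\mathcal S}|\partial f|^p\,d\nu$ is taken over the same ball), so the comparison with $g_f(x)$ at each scale is immediate from the definition \eqref{eq:fnt-g}.
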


\begin{remark}
It is interesting to note that the estimate \eqref{lusin-holder} implies (and is therefore equivalent to) the Poincar\'e inequality on balls in  Corollary \ref{Poincare convex}. This can be proved as in the proof of Lemma 5.15 in \cite{MR1654771}. We thank an anonymous referee for this remark.
\end{remark}

The following proposition shows that the maximal function $g_f$ can not be in $L^p(X,\mu)$ unless $f$ is constant.

\begin{prop}\label{triviality}
Let $p >1 $. Let $f \in C(K)$. If there exists $g \in L^p(K,\mu)$ such that  $\mu$-almost everywhere
\[
|f(x)-f(y)|\le  d(x,y)^{\alpha_p}(g(x)+g(y)),
\]
then $f$ is constant.
\end{prop}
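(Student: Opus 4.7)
The approach is an averaging-plus-chaining argument along geodesics in $K$. The guiding observation is that $\alpha_p = 1-1/p+d_h/p$ is exactly the critical scale on the Vicsek fractal: when combined with H\"older's inequality and the $d_h$-Ahlfors volume growth, the powers of $r$ will cancel completely, leaving only a term that vanishes because rectifiable curves have $\mu$-measure zero in $K$ (recall $d_h = \log 5/\log 3 > 1$).

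First I would promote the $\mu\otimes\mu$-a.e.\ hypothesis to an honest pointwise statement on regularizations. Setting $f_r(z):=\fint_{B(z,r)} f\,d\mu$ and $G_r(z):=\fint_{B(z,r)} g\,d\mu$, integrating the hypothesis over $B(x,r)\times B(y,r)$ and invoking Fubini yields, for every $x,y\in K$ and $r>0$,
\[
|f_r(x)-f_r(y)| \le (d(x,y)+2r)^{\alpha_p}\bigl(G_r(x)+G_r(y)\bigr).
\]
Next, fix $x,y\in K$, let $\gamma:[0,L]\to K$ with $L=d(x,y)$ be a geodesic joining them, put $N=\lceil L/r\rceil$, and sample $z_k = \gamma(kL/N)$. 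Applying the triangle inequality $|f_r(x)-f_r(y)| \le \sum_k |f_r(z_k)-f_r(z_{k+1})|$ and the averaged estimate gives
\[
|f_r(x)-f_r(y)| \le 2(3r)^{\alpha_p}\sum_{k=0}^N G_r(z_k).
\]

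The remaining task is to bound the sum. By H\"older's inequality and Ahlfors regularity, $G_r(z_k) \le C r^{-d_h/p}\|g\|_{L^p(B(z_k,r))}$. The balls $B(z_k,r)$ have bounded overlap (consecutive centers lie at distance $\le r$), so a discrete H\"older summation combined with $N\simeq L/r$ yields
\[
\sum_{k=0}^N G_r(z_k) \le C\,L^{1-1/p}\,r^{-\alpha_p}\,\|g\|_{L^p(U_r(\gamma))},
\]
where $U_r(\gamma)$ denotes the $r$-neighborhood of $\gamma$. The exponent $-\alpha_p = -(d_h/p + 1 - 1/p)$ matches exactly the $+\alpha_p$ from the preceding display.

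Combining the last two displays gives $|f_r(x)-f_r(y)| \le C L^{1-1/p}\,\|g\|_{L^p(U_r(\gamma))}$, with all powers of $r$ having cancelled. Letting $r\to 0$, the left-hand side tends to $|f(x)-f(y)|$ by continuity of $f$; on the right-hand side, $U_r(\gamma)\downarrow \gamma$ monotonically, and $\gamma$, being a rectifiable curve of finite $1$-dimensional Hausdorff measure, is $\mu$-negligible because $d_h>1$. Absolute continuity of $g^p\in L^1(K,\mu)$ then forces $\|g\|_{L^p(U_r(\gamma))}\to 0$, so $f(x)=f(y)$; since $x,y$ were arbitrary, $f$ is constant. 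The main obstacle I expect is the exponent bookkeeping: the conclusion rests on the algebraic identity $\alpha_p = d_h/p + 1 - 1/p$ forcing the powers of $r$ to cancel \emph{exactly}, which is precisely what marks $\alpha_p$ as the critical H\"older scale at which the Haj\l{}asz-Sobolev approach must collapse to constants on $K$.
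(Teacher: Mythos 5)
Your argument is correct, and it takes a genuinely different route from the paper. The paper's proof stays inside the machinery it has built: it rescales the Haj\l{}asz inequality by the contractions $\Psi_w$, invokes the equivalence $W^{1,p}(K)=\mathcal F_p$ and the weak gradient $\partial f$ together with its scaling $\partial(f\circ\Psi_w)=3^{-m}(\partial f)\circ\Psi_w$ to obtain the uniform bound $\int_{\mathcal S\cap K_w}|\partial f|^p\,d\nu\le C\int_{K_w}g^p\,d\mu$ on every simplex, and then covers an edge of the skeleton by $m$-simplices whose union has $\mu$-measure $\le 3^{m-n}5^{-m}\to 0$, forcing $\partial f=0$. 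Your chaining argument bypasses $\partial f$ and the characterization theorem entirely: mollify, chain along a geodesic with step $\simeq r$, and use H\"older plus Ahlfors regularity so that the exponents cancel exactly at $\alpha_p=1-\frac1p+\frac{d_h}{p}$, leaving $|f_r(x)-f_r(y)|\le CL^{1-1/p}\|g\|_{L^p(U_r(\gamma))}$ with the right-hand side vanishing because the geodesic is $\mu$-null ($d_h>1$). Note that both proofs share the same endgame (absolute continuity of $\int g^p\,d\mu$ over a shrinking neighborhood of a one-dimensional set), but yours is more elementary and strictly more general: it shows triviality of the continuous Haj\l{}asz class at exponent $1-\frac1p+\frac{Q}{p}$ on any compact geodesic Ahlfors $Q$-regular space with $Q>1$, with no self-similarity needed. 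The small points you should still nail down are routine: the reading of the hypothesis as a $\mu\otimes\mu$-a.e.\ statement (needed for the averaging step, and implicitly assumed by the paper too); the existence of geodesics in $(K,d)$ (it is a compact length space, hence geodesic); and the bounded-overlap count for the balls $B(z_k,r)$, which follows since consecutive centers are spaced at least $r/2$ apart along $\gamma$ when $L\ge r$ (and for $L<r$ there is nothing to chain).
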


\begin{proof}
We first obtain that for every $w\in W_n$ 
\[
|f(\Psi_w(x))-f(\Psi_w(y))|\le  3^{-n\alpha_p } d(x,y)^{\alpha_p}(g(\Psi_w(x))+g(\Psi_w(y))),
\]

Then, 
\begin{align*}
\|f\circ \Psi_w\|_{W^{1,p}(K)} & =\limsup_{r\to 0^+} \frac{1}{r^{\alpha_p}}\Bigg( \int_K\int_{B(x,r)}\frac{|f(\Psi_w(y))-f(\Psi_w(x))|^p}{ \mu(B(x,r))}\, d\mu(y)\, d\mu(x)\Bigg)^{\frac1p} \\
 &\le C 3^{-n\alpha_p }\limsup_{r\to 0^+} \Bigg( \int_K\int_{B(x,r)}\frac{|g(\Psi_w(y))|^p+|g(\Psi_w(x))|^p}{ \mu(B(x,r))}\, d\mu(y)\, d\mu(x)\Bigg)^{\frac1p}  \\
 &\le C 3^{-n\alpha_p } \left(\int_{K} (g\circ \Psi_w)^p d\mu\right)^{1/p}
\end{align*}

From Theorem \ref{carac Sobolev} we get that for every $w\in W_n$
\[
\int_{\mathcal S} |\partial (f \circ \Psi_w) |^p d\nu \le C 3^{-np\alpha_p } \int_{K} (g \circ \Psi_w)^p d\mu.
\]
From Remark \ref{scaling gradient} this yields
\[
3^{-np} 3^n \int_{\mathcal S \cap K_w} |\partial f  |^p d\nu \le C 3^{-np\alpha_p} 3^{nd_h} \int_{K_w} g^p d\mu.
\]
We obtain therefore that for every simplex $K_w$
\[
\int_{\mathcal S \cap K_w} |\partial f  |^p d\nu \le C  \int_{K_w} g^p d\mu
\]

Consider then an edge $\mathbf{e}(u,v)$, $u,v \in V_n$, $u \sim v$. For $m \ge n$, one can cover this edge with a union $A_m$ of $N_m$ $m$-simplices with $N_m \le 3^{m-n}$. One has then
\[
\int_{\mathbf{e}(u,v)} |\partial f |^p d\nu \le C  \int_{A_m} g^p d\mu
\] 
Since $\mu (A_m)\le N_m 5^{-m}$ goes to zero when $m\to +\infty$, one obtains $\int_{\mathbf{e}(u,v)} |\partial f |^p d\nu=0$. 
Since it is true for every edge $\mathbf{e}(u,v)$, we deduce that $\nu$ almost everywhere $\partial f=0$ and thus that $f$ is constant.
\end{proof}

\section{Real interpolation theory of the Besov-Lipschitz and Sobolev spaces}

\subsection{Basics of the $K$ method for real interpolation}\label{subSec:interpolation}

In this section, mostly to fix notations, we recall some basic definitions and results of the $K$ method for real interpolation. Those definitions are mostly taken from \cite[Section 2]{GKS}.
For details, we refer  for instance to \cite[Chapters 3 and 5]{BS}.  In the following we will use the interpolation theory for seminormed spaces as in  \cite{GKS}.

Let $X_0$ and $X_1$ be two Banach spaces. Assume that the pair $(X_0,X_1)$ is a compatible couple, i.e., there is some Hausdorff topological vector space in which each of $X_0$ and $X_1$ is continuously embedded. Then the sum $X_0+X_1$ is a Banach space under the norm 
\[
\|f\|_{X_0+X_1}:=\inf\{\|f_0\|_{X_0}+\|f_1\|_{X_1}, f=f_0+f_1\}.
\]

The $K$-functional of $(X_0,X_1)$ is defined for each $f\in X_0+X_1$ and $t>0$ by 
\[
K(f,t,X_0,X_1):=\inf\{\|f_0\|_{X_0}+t\|f_1\|_{X_1}, f=f_0+f_1\}.
\]

Suppose that $0<\theta<1$, $1\le q<\infty$ or $0\le \theta\le 1$, $ q=\infty$. Then the interpolation space $(X_0,X_1)_{\theta,q}$ consists of functions $f\in X_0+X_1$ such that 
\[
\|f\|_{\theta,q}=
\begin{cases}
\left(\int_0^{\infty}(t^{-\theta}K(f,t,X_0,X_1))^q\frac{dt}{t}\right)^{1/q}, &0<\theta<1,1\le q<\infty, \\
\sup_{t>0} t^{-\theta}K(f,t,X_0,X_1), &0\le \theta\le 1, q=\infty,
\end{cases}
\]
is finite. In that context, the reiteration theorem (see \cite[Chapter 5, Theorem 2.4]{BS}) writes as follows:
\begin{thm}\label{reiteration}
Let $(X_0,X_1)$ be a compatible couple and suppose $0\le \theta_0<\theta_1\le1$. Let $\overline{X}_{\theta_j}$ be an intermediate space of class $\theta_j$, $j=0,1$. Then for $0<\theta<1$ and $1\le q\le \infty$, one has $(\overline{X}_{\theta_0},\overline{X}_{\theta_1})_{\theta,q}=(X_0,X_1)_{\theta',q}$, where $\theta'=(1-\theta)\theta_0+\theta \theta_1$.
\end{thm}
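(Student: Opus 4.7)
Since this is the classical reiteration theorem for the real $K$-method, my plan is to follow the standard two-step scheme based on transferring $K$-functional bounds between the couple $(X_0,X_1)$ and the couple $(\overline{X}_{\theta_0},\overline{X}_{\theta_1})$. The hypothesis that $\overline{X}_{\theta_j}$ is of class $\theta_j$ (in the Bennett--Sharpley sense) supplies two basic estimates that drive the argument: the upper bound $K(f,t,X_0,X_1)\le c_j t^{\theta_j}\|f\|_{\overline{X}_{\theta_j}}$ valid for $f\in\overline{X}_{\theta_j}$ and all $t>0$, and the lower bound $\|f\|_{\overline{X}_{\theta_j}}\le C_j \|f\|_{X_0}^{1-\theta_j}\|f\|_{X_1}^{\theta_j}$ for $f\in X_0\cap X_1$. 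The change of variables $s=t^{\theta_1-\theta_0}$, together with the identity $\theta_0-\theta'=-\theta(\theta_1-\theta_0)$, will convert one interpolation norm into the other.

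For the inclusion $(\overline{X}_{\theta_0},\overline{X}_{\theta_1})_{\theta,q}\hookrightarrow (X_0,X_1)_{\theta',q}$, I would fix a decomposition $f=g_0+g_1$ with $g_j\in\overline{X}_{\theta_j}$ and use subadditivity of $K$ together with the upper-class bound to write
\[
K(f,t,X_0,X_1)\le c\,t^{\theta_0}\bigl(\|g_0\|_{\overline{X}_{\theta_0}}+t^{\theta_1-\theta_0}\|g_1\|_{\overline{X}_{\theta_1}}\bigr).
\]
Taking the infimum over all such decompositions yields $K(f,t,X_0,X_1)\le c\,t^{\theta_0}K(f,s,\overline{X}_{\theta_0},\overline{X}_{\theta_1})$ with $s=t^{\theta_1-\theta_0}$, so that $t^{-\theta'}K(f,t,X_0,X_1)\le c\,s^{-\theta}K(f,s,\overline{X}_{\theta_0},\overline{X}_{\theta_1})$. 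Inserting this into the integral (or supremum) defining the $(\theta',q)$-norm, and performing the substitution $s=t^{\theta_1-\theta_0}$ (with Jacobian $ds/s=(\theta_1-\theta_0)\,dt/t$), reduces it to a constant multiple of the $(\theta,q)$-norm of $f$ for the couple $(\overline{X}_{\theta_0},\overline{X}_{\theta_1})$; the case $q=\infty$ is handled identically with a supremum.

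For the reverse inclusion $(X_0,X_1)_{\theta',q}\hookrightarrow(\overline{X}_{\theta_0},\overline{X}_{\theta_1})_{\theta,q}$, I would invoke the equivalence of the $K$- and $J$-methods (the so-called fundamental lemma of real interpolation): any $f$ with finite $(\theta',q)$-norm admits a representation $f=\int_0^\infty u(s)\,ds/s$ with $u(s)\in X_0\cap X_1$ and
\[
\int_0^\infty\bigl(s^{-\theta'}J(u(s),s,X_0,X_1)\bigr)^q\,\tfrac{ds}{s}\le C\|f\|_{\theta',q;X_0,X_1}^q,
\]
where $J(u,s,X_0,X_1):=\max(\|u\|_{X_0},s\|u\|_{X_1})$. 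The lower-class estimate then gives $\|u(s)\|_{\overline{X}_{\theta_j}}\le C\,s^{-\theta_j}J(u(s),s,X_0,X_1)$, so the same family $u$ serves as a $J$-representation of $f$ for the couple $(\overline{X}_{\theta_0},\overline{X}_{\theta_1})$ with parameter $\sigma=s^{\theta_1-\theta_0}$. The analogous change of variables then produces the bound $\|f\|_{\theta,q;\overline{X}_{\theta_0},\overline{X}_{\theta_1}}\le C\|f\|_{\theta',q;X_0,X_1}$.

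The main obstacle is the converse direction, whose crux is the fundamental lemma converting the infimum-based $K$-functional control of $f$ into an explicit integral decomposition compatible with the new couple; once such a representation is in hand, the two-sided estimates and the change of variables go through routinely. The endpoint cases $\theta_0=0$ or $\theta_1=1$ (only with $q=\infty$) require separate but elementary treatment via the continuous embeddings $X_0\cap X_1\subset\overline{X}_{\theta}\subset X_0+X_1$. Since Theorem \ref{reiteration} is a classical and well-documented result \cite[Chapter 5, Theorem 2.4]{BS}, the above plan outlines the essential structure of the argument rather than carrying out all the technical estimates.
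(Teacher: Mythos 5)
The paper gives no proof of Theorem \ref{reiteration}: it is recalled verbatim from \cite[Chapter 5, Theorem 2.4]{BS} as a classical result. Your outline is the standard $K$/$J$-method proof of that theorem (forward inclusion from the class-$\theta_j$ upper bound $K(f,t,X_0,X_1)\le c_j t^{\theta_j}\|f\|_{\overline{X}_{\theta_j}}$ and the substitution $s=t^{\theta_1-\theta_0}$; reverse inclusion from the fundamental lemma together with the class-$\theta_j$ lower bound), and it matches the argument in the cited reference, so it is correct in structure and consistent with the paper's (delegated) proof.
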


\subsection{Besov-Lipschitz spaces}

We  consider the Besov Lipschitz spaces that were studied in \cite{BV3,BV1}, see also \cite{Gri}.
\begin{defn}\label{def:BesovLip}
For $p \ge 1$ and $ \alpha>0$,  the Besov Lipschitz space $\mathcal{B}^{\alpha}_{p,\infty}(K)$ is defined by
\[
\mathcal{B}^{\alpha}_{p,\infty}(K)=\left\{ f \in L^p(K,\mu), \,  \sup_{r >0} \frac{1}{r^{\alpha}}\Bigg( \int_K\int_{B(x,r)}\frac{|f(y)-f(x)|^p}{ \mu(B(x,r))}\, d\mu(y)\, d\mu(x)\Bigg)^{\frac1p} <+\infty \right\}.
\]
\end{defn}
We note that by definition, for $p>1$, $\mathcal{B}^{\alpha_p}_{p,\infty}(K)=W^{1,p}(K)$. 
%
%

\subsection{Interpolation of the Besov-Lipschitz spaces, $p>1$}

The goal of this section is to prove the following theorem:

\begin{thm}\label{interpolation}
 For every $p > 1$ and $0\le \alpha \le\alpha_p=1-\frac1p+\frac{d_h}{p} $
\[
\mathcal{B}^{\alpha}_{p,\infty}(K)=(L^p(K,\mu), W^{1,p}(K))_{\alpha/\alpha_p,\infty}.
\]
\end{thm}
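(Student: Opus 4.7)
The strategy is to directly estimate the $K$-functional $K(f,t,L^p(K,\mu),W^{1,p}(K))$ in both directions using the Besov seminorm.

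The easier embedding $(L^p(K,\mu), W^{1,p}(K))_{\alpha/\alpha_p,\infty} \hookrightarrow \mathcal{B}^{\alpha}_{p,\infty}(K)$ follows from the triangle inequality: for any decomposition $f = f_0 + f_1$ with $f_0 \in L^p$ and $f_1 \in W^{1,p}$,
\[
\left(\int_K \int_{B(x,r)} \frac{|f(x)-f(y)|^p}{\mu(B(x,r))}\, d\mu(y)\, d\mu(x)\right)^{1/p} \le C\|f_0\|_{L^p} + C r^{\alpha_p} \|f_1\|_{W^{1,p}},
\]
where the first term is obtained by Fubini together with the $d_h$-Ahlfors regularity of $\mu$, and the second is Corollary \ref{cor:sup-limsup}. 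Setting $t = r^{\alpha_p}$ and infimizing over decompositions gives an upper bound by $C\, K(f,t,L^p,W^{1,p})$; multiplying by $r^{-\alpha} = t^{-\alpha/\alpha_p}$ and taking the supremum over $r>0$ yields the embedding.

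For the harder embedding $\mathcal{B}^{\alpha}_{p,\infty}(K) \hookrightarrow (L^p, W^{1,p})_{\alpha/\alpha_p,\infty}$, for each scale $3^{-n}$ I would construct an $n$-piecewise affine approximation $f_n$ of $f$ using \emph{averages} rather than point values (since $\mathcal{B}^{\alpha}_{p,\infty}(K)$ need not be contained in $C(K)$ when $\alpha < \alpha_p$): set $f_n(v) := \fint_{B(v,\, c 3^{-n})} f\, d\mu$ for each $v \in V_n$ and extend as in Section 2.5. Given $t > 0$, choose $n$ with $3^{-n\alpha_p} \simeq t$ and use the decomposition $f = (f - f_n) + f_n$. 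The proof then reduces to the two estimates (A) $\|f - f_n\|_{L^p} \le C\, 3^{-n\alpha} \|f\|_{\mathcal{B}^{\alpha}_{p,\infty}(K)}$ and (B) $\|f_n\|_{W^{1,p}} \le C\, 3^{n(\alpha_p - \alpha)} \|f\|_{\mathcal{B}^{\alpha}_{p,\infty}(K)}$; combined with $t = 3^{-n\alpha_p}$ they yield $K(f, t, L^p, W^{1,p}) \le 2C\, t^{\alpha/\alpha_p} \|f\|_{\mathcal{B}^{\alpha}_{p,\infty}(K)}$.

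Estimate (A) is straightforward: for $x$ in an $n$-simplex $K_w$, the value $f_n(x)$ lies in the closed interval spanned by $\{f_n(v) : v \in V_n \cap K_w\}$, so Jensen's inequality gives $|f_n(x) - f(x)|^p \le C \fint_{B(x,\, C 3^{-n})} |f(y) - f(x)|^p\, d\mu(y)$, and integrating over $K$ produces the bound via Ahlfors regularity and the Besov seminorm at scale $\simeq 3^{-n}$. Estimate (B) is the main obstacle: by Theorem \ref{carac Sobolev} and Lemma \ref{lem:p-energyPA}, it suffices to bound $\mathcal{E}_p^n(f_n) = \tfrac{1}{2} 3^{(p-1)n} \sum_{x \sim y \in V_n} |f_n(x) - f_n(y)|^p$. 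A double application of Jensen to the defining averages bounds each pairwise term by $\mu(B(x, 3^{-n}))^{-2}$ times a double integral of $|f(u)-f(v)|^p$ over balls of radius $\simeq 3^{-n}$ around $x$ and $y$. Summing over the $\simeq 5^n$ $n$-edges, exploiting the bounded overlap of $\{B(v, c 3^{-n})\}_{v \in V_n}$ guaranteed by the Vicsek tree structure, and invoking the Besov seminorm at scale $3^{-n}$, I obtain a bound of order $C\, 5^{2n} (3^{-n})^{p\alpha + d_h} \|f\|_{\mathcal{B}^{\alpha}_{p,\infty}(K)}^p$; the arithmetic identities $5^n = 3^{n d_h}$ and $p \alpha_p = p - 1 + d_h$ then collapse the full expression with prefactor $3^{(p-1)n}$ to $C\, 3^{np(\alpha_p - \alpha)} \|f\|_{\mathcal{B}^{\alpha}_{p,\infty}(K)}^p$, confirming (B). The delicate bookkeeping in reconciling the three scaling factors $3^{(p-1)n}$, $5^{2n}$, and $(3^{-n})^{p\alpha + d_h}$ through the Vicsek combinatorics is where most of the work will lie.
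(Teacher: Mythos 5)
Your proposal is correct and follows essentially the same route as the paper: both directions are proved by estimating the $K$-functional, with the hard direction using an $n$-piecewise affine approximant built from local averages of $f$ (you average over balls $B(v,c3^{-n})$, the paper over the $(n+1)$-simplex stars $K_{n+1}^*(v)$ — a cosmetic difference), and your scaling arithmetic $3^{(p-1)n}\cdot 5^{2n}\cdot 3^{-n(p\alpha+d_h)}=3^{np(\alpha_p-\alpha)}$ matches the paper's computation. The only point you gloss over is the regime of large $t$, where no $n\ge 0$ satisfies $3^{-n\alpha_p}\simeq t$; the paper handles it by the decomposition $f=(f-\fint_K f)+\fint_K f$, which gives $K(f,t)\le C E_p(f,2)^{1/p}$ uniformly in $t$.
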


The key ingredient to prove this interpolation result is the following estimate that follows from our previous results (see Corollary \ref{cor:sup-limsup}):
\begin{align*}
 & \sup_{r >0} \frac{1}{r^{\alpha_p}}\Bigg( \int_K\int_{B(x,r) }\frac{|f(y)-f(x)|^p}{ \mu(B(x,r))}\, d\mu(y)\, d\mu(x)\Bigg)^{\frac1p} \\
 \le & C \limsup_{r\to 0^+} \frac{1}{r^{\alpha_p}}\Bigg( \int_K\int_{B(x,r) }\frac{|f(y)-f(x)|^p}{ \mu(B(x,r))}\, d\mu(y)\, d\mu(x)\Bigg)^{\frac1p}.
\end{align*}
We note that this estimate implies that for $\alpha>\alpha_p$ the space $\mathcal{B}^{\alpha}_{p,\infty}(K)$ is trivial, i.e., $\mathcal{B}^{\alpha}_{p,\infty}(K)$ only consists of constant functions. Therefore the interpolation scale given by Theorem \ref{interpolation} is optimal with the endpoints $L^p(K,\mu)$ and $W^{1,p}(K)$.

Following the notation in Section \ref{subSec:interpolation}, the $K$-functional of the couple $(L^p(K,\mu), W^{1,p}(K))$ is defined for $f \in L^p(K,\mu)+ W^{1,p}(K)$ by
\[
K(f,t)=\inf \{ \|g \|_{L^p(K,\mu)}+t \| h \|_{W^{1,p}(K)}, \, f=g+h \}.
\]
For any $0\le \theta\le 1$, the interpolation space $(L^p(K,\mu), W^{1,p}(K))_{\theta,\infty}$ consists of all  $f \in L^p(K,\mu)+ W^{1,p}(K)$ such that $\sup_{t>0} t^{-\theta}K(f,t)<\infty$.

 For simplicity, we adopt the notation $E_p(f,r)$ for $f\in L^p(K,\mu)$ and  $r>0$  as in \cite{GKS}, that is, 
\[
E_p(f,r)= \int_K\int_{B(x,r) }\frac{|f(y)-f(x)|^p}{ \mu(B(x,r))}\, d\mu(y)\, d\mu(x).
\]
Adapting to our framework techniques from \cite[Theorem 4.1]{GKS}, we obtain the following main result of this section.
\begin{thm}\label{thm:K-func}
Let $p>1 $. There exist  $C_1, C_2>0$   such that for any $f\in L^p(K,\mu)+ W^{1,p}(K)$ and $r>0$,
\[
C_1 E_p(f,r)^{\frac1p} \le K(f,r^{\alpha_p}) \le C_2 E_p(f,r)^{\frac1p}.
\]
\end{thm}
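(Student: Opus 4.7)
The plan is to prove the two-sided bound by treating the lower bound as a direct consequence of Corollary \ref{cor:sup-limsup} and the upper bound by exhibiting an explicit near-optimal decomposition $f = g+h$ built from an averaged piecewise-affine projection.

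For the \textbf{lower bound} $C_1 E_p(f,r)^{1/p} \le K(f,r^{\alpha_p})$, I would fix an arbitrary decomposition $f=g+h$ with $g \in L^p(K,\mu)$ and $h \in W^{1,p}(K)$ and apply Minkowski's inequality in the measure $d\mu(x)\,d\mu(y)/\mu(B(x,r))$ to get $E_p(f,r)^{1/p} \le E_p(g,r)^{1/p} + E_p(h,r)^{1/p}$. The pointwise bound $|g(y)-g(x)|^p \le 2^{p-1}(|g(x)|^p+|g(y)|^p)$ combined with Fubini yields $E_p(g,r)^{1/p} \le C\|g\|_{L^p}$, while Corollary \ref{cor:sup-limsup} directly gives $E_p(h,r)^{1/p} \le C r^{\alpha_p}\|h\|_{W^{1,p}}$. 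Taking the infimum over decompositions then delivers the lower bound.

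For the \textbf{upper bound}, I would fix $n$ such that $3^{-n-1} < r \le 3^{-n}$ and construct the decomposition as follows. For each vertex $v \in V_n$, let $U_v$ denote the union of $n$-simplices containing $v$, set $\tilde f(v) := \fint_{U_v} f\,d\mu$, and let $P_n f$ be the unique $n$-piecewise affine function taking the value $\tilde f(v)$ at $v \in V_n$. Put $h := P_n f$ and $g := f - P_n f$. By Lemma \ref{lem:p-energyPA} and Theorem \ref{carac Sobolev}, $\|h\|_{W^{1,p}}^p \simeq \mathcal{E}_p^n(P_nf) = \tfrac12 3^{(p-1)n}\sum_{u\sim v,\, u,v\in V_n}|\tilde f(u)-\tilde f(v)|^p$. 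By Jensen's inequality applied to each average, and the fact that $U_v \subset B(x,Cr)$ for $x \in U_u$ whenever $u\sim v$,
\[
|\tilde f(u)-\tilde f(v)|^p \le \frac{1}{\mu(U_u)\mu(U_v)}\int_{U_u}\int_{B(x,Cr)}|f(x)-f(y)|^p\,d\mu(y)\,d\mu(x).
\]
Summing over the $O(5^n)$ edges with bounded overlap (each simplex contains only a bounded number of vertices/edges), and using $\mu(U_v)\simeq 5^{-n}$ together with $\mu(B(x,Cr))\simeq r^{d_h}\simeq 5^{-n}$, gives $\sum|\tilde f(u)-\tilde f(v)|^p \le C\cdot 5^{n}\cdot E_p(f,Cr)$. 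The exponent bookkeeping $3^{-np\alpha_p}\cdot 3^{(p-1)n}\cdot 5^{n} = 1$ (which uses $p\alpha_p = p-1+d_h$ and $5=3^{d_h}$) then yields $r^{\alpha_p}\|h\|_{W^{1,p}} \le C\,E_p(f,Cr)^{1/p}$.

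For the $L^p$ bound on $g = f-P_n f$, I would split the integral over the $n$-simplices and use the triangle inequality $f - P_n f = (f-\bar f_{K_w}) + (\bar f_{K_w} - P_n f)$ on each $K_w$, where $\bar f_{K_w} = \fint_{K_w}f\,d\mu$. A standard Poincaré-on-a-ball calculation bounds $\int_{K_w}|f-\bar f_{K_w}|^p d\mu$ by a double integral of $|f(x)-f(y)|^p$ over $K_w\times K_w$, and the second piece is controlled by $\max_{v\in V_n^w}|\bar f_{K_w}-\tilde f(v)|^p$ which the same Jensen-plus-Fubini argument bounds since $K_w\subset U_v$ for any vertex $v$ of $K_w$. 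Summing over $w\in W_n$ with bounded overlap of the enlarged neighborhoods gives $\|g\|_{L^p}^p \le C E_p(f,Cr)$. The extra constant $C$ between $E_p(f,Cr)$ and $E_p(f,r)$ is harmless: one simply picks $n$ one level finer so that $3^{-n}\le r/C$, which changes the scaling by bounded factors.

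The \textbf{main obstacle} is the combinatorial/scaling bookkeeping in the energy estimate for $h$: one must verify that summing the contributions from overlapping neighborhoods $U_v$ produces exactly the right power of $5^n$ so that it cancels against $3^{-np\alpha_p}\cdot 3^{(p-1)n}$. This identity is precisely where the critical exponent $\alpha_p = 1-\tfrac1p + \tfrac{d_h}{p}$ enters the proof, and it is why $W^{1,p}$ sits at the endpoint of the Besov interpolation scale.
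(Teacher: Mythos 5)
Your proposal is correct and follows essentially the same route as the paper: the lower bound via Minkowski plus Corollary \ref{cor:sup-limsup}, and the upper bound via the decomposition $f=(f-\Phi_n)+\Phi_n$ where $\Phi_n$ is the piecewise affine interpolant of local averages of $f$ at the vertices of $V_n$, with the same Jensen/bounded-overlap estimates and the same exponent bookkeeping $3^{-np\alpha_p}3^{(p-1)n}5^{n}=1$. The only cosmetic differences are that the paper averages over the $(n+1)$-simplices containing each vertex and controls $\|f-\Phi_n\|_{L^p}$ via the partition of unity $\{u_v\}$ rather than by inserting simplex averages.
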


\begin{proof}
It is easy to show the inequality
\[
C_1  E_p(f,r)^{\frac1p}  \le K(f,r^{\alpha_p}).
\]
 Indeed, suppose that  $f=g+h$, where $g\in L^p(K,\mu)$ and $h\in W^{1,p}(K)$.
Then by Minkowski's inequality and Corollary \ref{cor:sup-limsup}, we obtain
\begin{align*}
     E_p(f,r)^{\frac1p}  
     & \le E_p(g,r)^{\frac1p}+ E_p(h,r)^{\frac1p}
     \le C\left(\|g\|_{L^p(K,\mu)}+r^{\alpha_p}r^{-\alpha_p}E_p(h,r)^{\frac1p}\right)
     \\ &\le  C\left(\|g\|_{L^p(K,\mu)}+r^{\alpha_p}\|h\|_{W^{1,p}(K)}\right).
\end{align*}

Now turn to the proof of the second inequality, that is, $K(f,r^{\alpha_p}) \le C_2  E_p(f,r)^{\frac1p}$.
Given a function $f \in L^p(K,\mu)$, we first define a sequence of piecewise affine functions $\{\Phi_n\}_{n\ge 1}$ associated with $f$ on the cable systems $\{\bar{V}_n\}_{n\ge 1}$ as follows. 

For any fixed $n\ge 1$, we define the function $f_n$ on $V_n$ by
\[
f_{n}(v):=\frac{1}{\mu(K_{n+1}^*(v))} \int_{K_{n+1}^*(v) } f d\mu,\quad v \in V_n,
\]
where $K_{n+1}^*(v)$ is the union of the $(n+1)$-simplices containing $v$. Then, let $\Phi_n$ be the unique piecewise affine function that coincides with $f_n$ on $V_n$. More precisely, one writes
\[
\Phi_n(x)=\sum_{v \in V_n} \left(\frac{1}{\mu(K_{n+1}^*(v))} \int_{K_{n+1}^*(v) } f d\mu \right) \, u_v(x)
=\sum_{v \in V_n} f_{n}(v) \, u_v(x),
\]
where $u_v$ is the unique piecewise affine function on the cable system $\bar{V}_n$ that takes the value 1 on $v$ and zero on the other vertices. We have $0\le u_v\le 1$, $\supp u_v \subset K_n^*(v)$, where  $K_n^*(v)$ is the union of $n$-simplices containing $v$ and
\[
\sum_{v \in V_n} u_v(x)=1, \quad \forall \, x \in K.
\]

Set $g=f-\Phi_n$ and $h=\Phi_n$ so that $f=g+h$.  We claim that $g\in L^p(K,\mu)$ and $h\in W^{1,p}(K)$. Moreover, we claim that both $\|g \|_{L^p(K,\mu)}$ and $\| h \|_{W^{1,p}(K)}$ can be bounded in terms of $E_p(f,r)^{1/p}$ where $r$ has order $3^{-n}$.

We begin with estimating $\|g \|_{L^p(K,\mu)}$. Note that the covering $\{K_n^*(v)\}_{v\in V_n}$ has the bounded overlap property. Also, for any $x\in K_n^*(v)$, there exists a constant $c_1>1$ ($c_1=3$ will do) such that $K_{n+1}^*(v)\subset B(x,c_1 3^{-n})$. Therefore by H\"older's inequality one has
\begin{equation}\label{eq:g-Lp}
\begin{split}
   \|g \|_{L^p(K,\mu)}^p 
   &\le C\sum_{v\in V_n} \int_{K_n^*(v)}|f(x)-f_n(v)|^p (u_v(x))^p d\mu(x)
   \\ &\le C\sum_{v\in V_n} \int_{K_n^*(v)}\fint_{K_{n+1}^*(v)} |f(x)-f(y)|^p d\mu(y) d\mu(x)
   \\ &\le C\int_{K}\fint_{B(x,c_1 3^{-n})} |f(x)-f(y)|^p d\mu(y) d\mu(x).
\end{split}
\end{equation}

It remains to control $\| h \|_{W^{1,p}(K)}$. By Proposition  \ref{prop:comparison2}, it is equivalent to estimate the $p$-energy $\mathcal E_p(h)$. 
Since $h$ is an $n$-piecewise affine function, one has $\mathcal E_p^m(h)=\mathcal E_p^n(h)$ for any $m\ge n$ (see Lemma \ref{lem:p-energyPA}). We thus need to estimate 
$\mathcal E_p^n(h)$. Observe that for any $x\in V_n$, one has $h(x)=f_n(x)$ by definition. Hence 
\begin{align*}
    \mathcal E_p^n(h)=\frac123^{(p-1)n}\sum_{x,y\in V_n,x\sim y}|f_n(x)-f_n(y)|^p.
\end{align*}
For any neighboring vertices $x,y \in V_n$, H\"older's inequality yields
\begin{align*}
    |f_n(x)-f_n(y)|
    & \le \frac{1}{\mu(K_{n+1}^*(x))\mu(K_{n+1}^*(y))}\int_{K_{n+1}^*(x)}\int_{K_{n+1}^*(y)}|f(z)-f(w)|d\mu(z)d\mu(w)
    \\ &\le C\left(\frac{1}{5^{2n}}\int_{K_{n+1}^*(x)}\int_{K_{n+1}^*(y)}|f(z)-f(w)|^pd\mu(z)d\mu(w)\right)^{\frac1p}.
\end{align*}
Thanks to the fact that $x,y\in V_n$ are adjacent, there exists a constant $c_2>1$ ($c_2=3$ will do) such that $K_{n+1}^*(y) \subset B(z, c_23^{-n})$ for any $z\in K_{n+1}^*(x)$.

Therefore we get
\[
  |f_n(x)-f_n(y)|^p\le \frac{C}{5^{2n}}\int_{K_{n+1}^*(x)}\int_{B(z, c_2 3^{-n})}|f(z)-f(w)|^pd\mu(z)d\mu(w).
\]
By the bounded overlap property of $\{K_{n+1}^*(v)\}_{v\in V_n}$, we then have
\begin{align*}
    \mathcal E_p^n(h)
    & \le C\frac{3^{(p-1)n}}{5^{2n}}\sum_{x,y\in V_n, x\sim y} \int_{K_{n+1}^*(x)}\int_{K_{n+1}^*(y)}|f(z)-f(w)|^pd\mu(z)d\mu(w)
    \\ &\le  C\frac{3^{(p-1)n}}{5^{2n}}\int_{K}\int_{B(z, c_2 3^{-n})}|f(z)-f(w)|^pd\mu(z)d\mu(w).
\end{align*}
Set $r_n=c_3 3^{-n}$ where $c_3=\max \{ c_1,c_2 \}$. We can rewrite the above inequality as
\[
\mathcal E_p^n(h) \le  \frac{C}{r_n^{p\alpha_p}}\int_{K}\fint_{B(z, r_n)}|f(z)-f(w)|^pd\mu(z)d\mu(w).
\]
Consequently, 
\[
\|h\|_{W^{1,p}(K)}^p \le C\mathcal E_p(h) \le  \frac{C}{r_n^{p\alpha_p}}\int_{K}\fint_{B(z, r_n)}|f(z)-f(w)|^pd\mu(z)d\mu(w).
\]

On the other hand, \eqref{eq:g-Lp} also gives that 
\[
\|g\|_{L^p(K)}\le C\left(\int_K\fint_{B(x,r_n)}|f(x)-f(y)|^pd\mu(y)d\mu(x)\right)^{\frac1p}.
\]
We conclude that for every $t >0$ and $n \ge 1$
\[
K(f,t) \le C \left( 1+\frac{t}{r_n^{\alpha_p}} \right)E_p(f,r_n)^{\frac1p}.
\]
On the other hand the decomposition $f=g+h$ with $h=\int_K f$ yields that for every $t >0$
\[
K(f,t) \le C E_p(f,2)^{1/p}.
\]
The conclusion follows.
\end{proof}

We thus get as a corollary, the theorem stated at the beginning of the section.

\begin{cor}
For every $p > 1$ and $0\le \alpha \le\alpha_p $, we have
\[
\mathcal{B}^{\alpha}_{p,\infty}(K)=(L^p(K,\mu), W^{1,p}(K))_{\alpha/\alpha_p,\infty}.
\]
\end{cor}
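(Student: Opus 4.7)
The plan is to obtain the corollary as an essentially immediate consequence of Theorem \ref{thm:K-func} via a single change of variable in the supremum defining the interpolation seminorm. All the analytic content has already been placed in that theorem, so this final step is bookkeeping rather than new work.

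Concretely, set $\theta = \alpha/\alpha_p \in [0,1]$. By definition, the seminorm on $(L^p(K,\mu), W^{1,p}(K))_{\theta,\infty}$ is
\[
\| f \|_{(L^p(K,\mu), W^{1,p}(K))_{\theta,\infty}} = \sup_{t>0} t^{-\theta} K(f,t).
\]
Since $r \mapsto r^{\alpha_p}$ is a bijection of $(0,\infty)$ onto itself, I would reparametrize by $t = r^{\alpha_p}$. Using $\alpha_p\theta = \alpha$ this gives
\[
\sup_{t>0} t^{-\theta} K(f,t) = \sup_{r>0} r^{-\alpha} K(f, r^{\alpha_p}).
\]
Then Theorem \ref{thm:K-func} furnishes constants $C_1, C_2 > 0$, independent of $f$ and $r$, with $C_1 E_p(f,r)^{1/p} \le K(f, r^{\alpha_p}) \le C_2 E_p(f,r)^{1/p}$. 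Substituting this two-sided bound yields
\[
\sup_{t>0} t^{-\theta} K(f,t) \simeq \sup_{r>0} r^{-\alpha} E_p(f,r)^{1/p} = \| f \|_{\mathcal{B}^{\alpha}_{p,\infty}(K)},
\]
which is exactly the equivalence of seminorms required by the corollary.

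It remains only to check that the two spaces coincide as sets. Any $f \in \mathcal{B}^{\alpha}_{p,\infty}(K)$ lies in $L^p(K,\mu) \subset L^p(K,\mu) + W^{1,p}(K)$ by definition, and conversely any element of $L^p(K,\mu) + W^{1,p}(K)$ lies in $L^p(K,\mu)$ since $W^{1,p}(K) \subset L^p(K,\mu)$ (using Remark \ref{rem:continuity} and the compactness of $K$). Combined with the seminorm equivalence above, this gives the claimed identification of spaces. There is no real obstacle in this proof; the only non-trivial ingredient is Theorem \ref{thm:K-func}, whose proof relied on building an explicit near-optimal decomposition $f = (f - \Phi_n) + \Phi_n$ via piecewise affine approximants $\Phi_n$, and that work is already done.
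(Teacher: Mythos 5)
Your proof is correct and follows essentially the same route as the paper: the paper's own proof is exactly the chain of equivalences $\sup_{r>0} r^{-\alpha} E_p(f,r)^{1/p} \simeq \sup_{r>0} r^{-\alpha} K(f,r^{\alpha_p}) \simeq \sup_{t>0} t^{-\alpha/\alpha_p} K(f,t)$, i.e.\ Theorem \ref{thm:K-func} plus the reparametrization $t=r^{\alpha_p}$. Your additional remark that the underlying sets coincide (since $W^{1,p}(K)\subset L^p(K,\mu)$ by definition) is a harmless bit of extra care that the paper leaves implicit.
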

\begin{proof}
By Corollary \ref{cor:sup-limsup}
\begin{align*} 
\sup_{r>0}\frac1{r^{\alpha}}\Bigg( \int_K\int_{B(x,r) \cap K}\frac{|f(y)-f(x)|^p}{ \mu(B(x,r))}\, d\mu(y)\, d\mu(x)\Bigg)^{\frac1p} \simeq \sup_{r>0} r^{-\alpha} K(f,r^{\alpha_p})
\simeq \sup_{t>0} t^{-\alpha/\alpha_p} K(f,t).
\end{align*}
\end{proof}

By the reiteration Theorem \ref{reiteration}, we obtain therefore as a corollary the following interpolation result for the Besov-Lipschitz spaces: For $p >1$, $0 \le \theta_1 < \theta_2 \le \alpha_p$, $0<\beta <1$
\begin{align}\label{interpolationB}
\mathcal{B}^{\theta_3}_{p,\infty}(K)=(\mathcal{B}^{\theta_1}_{p,\infty}(K), \mathcal{B}^{\theta_2}_{p,\infty}(K))_{\beta,\infty}, \quad \theta_3=(1-\beta)\theta_2+ \beta \theta_1.
\end{align}
Such interpolation results for the Besov-Lipschitz spaces are not new: We refer to \cite{GKS}, \cite{HanMullerYang}, \cite{MullerYang} and \cite{Yang} for versions of the  interpolation \eqref{interpolationB} in different settings. 

\subsection{Interpolation of the Besov-Lipschitz spaces, $p=1$}

For $p=1$, the endpoint of the interpolation scale is not $W^{1,1}(K)$ but the larger space $BV(K)$ of bounded variation functions that was introduced in \cite{BV3}.

\begin{defn}\label{def:BV}
The Korevaar-Schoen BV space $BV(K)$ is defined by
\[
BV(K)=\left\{ f \in L^1(K,\mu), \,  \limsup_{r\to 0^+} \frac{1}{r^{d_h}}\int_K\int_{B(x,r)}\frac{|f(y)-f(x)|}{ \mu(B(x,r))}\, d\mu(y)\, d\mu(x)<+\infty \right\},
\]
and for $f\in BV(K)$ we define
\[
\|f\|_{BV(K)}:=\limsup_{r\to 0^+} \frac{1}{r^{d_h}}\int_K\int_{B(x,r)}\frac{|f(y)-f(x)|}{ \mu(B(x,r))}\, d\mu(y)\, d\mu(x).\
\]
\end{defn}

\begin{remark}
From Proposition \ref{prop:comparison2} it is clear that $W^{1,1}(K) \subset \mathcal{F}_1 \subset BV(K)$. However, all the inclusions are strict since $BV(K)$ also contains non-continuous functions, see \cite{BV3}.
\end{remark}

\begin{thm}
 For  $0\le \alpha \le d_h $,
\[
\mathcal{B}^{\alpha}_{1,\infty}(K)=(L^1(K,\mu), BV(K))_{\alpha/d_h,\infty}.
\]
\end{thm}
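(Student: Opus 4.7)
The plan is to adapt the proof of Theorem~\ref{thm:K-func} to the endpoint $p = 1$, reducing the interpolation identity to the K-functional estimate
\begin{equation*}
C_1\, E_1(f, r) \le K(f, r^{d_h}) \le C_2\, E_1(f, r), \qquad r > 0,
\end{equation*}
where $E_1(f, r) = \int_K \int_{B(x,r)} \frac{|f(y)-f(x)|}{\mu(B(x,r))}\, d\mu(y)\, d\mu(x)$ and the K-functional is that of the compatible couple $(L^1(K,\mu), BV(K))$. Once such an equivalence is in hand, substituting $t = r^{d_h}$ in the definition of the $(L^1, BV)_{\alpha/d_h, \infty}$ seminorm identifies it with $\sup_{r>0} r^{-\alpha} E_1(f, r) = \|f\|_{\mathcal{B}^\alpha_{1,\infty}(K)}$, exactly as in the corollary to Theorem~\ref{thm:K-func}.

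For the lower bound, I would start from any decomposition $f = g + h$ with $g \in L^1(K,\mu)$ and $h \in BV(K)$ and apply the triangle inequality: $E_1(f, r) \le E_1(g, r) + E_1(h, r)$. Fubini combined with $d_h$-Ahlfors regularity yields $E_1(g, r) \le C \|g\|_{L^1(K,\mu)}$ uniformly in $r$, while the $p = 1$ analog of Corollary~\ref{cor:sup-limsup} (established in \cite{BV3}) gives $E_1(h, r) \le C\, r^{d_h} \|h\|_{BV(K)}$ uniformly in $r$. Taking the infimum over decompositions produces $E_1(f, r) \le C K(f, r^{d_h})$.

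For the upper bound I mirror the construction used for $p > 1$: for $r_n = c_3\, 3^{-n}$, define the $n$-piecewise affine approximant $\Phi_n$ from the averages $f_n(v) = \frac{1}{\mu(K_{n+1}^*(v))}\int_{K_{n+1}^*(v)} f\, d\mu$, $v \in V_n$, and split $f = g + h$ with $g = f - \Phi_n$ and $h = \Phi_n$. The bound $\|g\|_{L^1(K,\mu)} \le C E_1(f, r_n)$ is obtained exactly as in \eqref{eq:g-Lp} (with H\"older's inequality no longer needed at $p = 1$). For $h = \Phi_n$, the averaging estimate on adjacent vertices from the proof of Theorem~\ref{thm:K-func}, together with Lemma~\ref{lem:p-energyPA}, gives
\begin{equation*}
\mathcal{E}_1(\Phi_n) = \tfrac{1}{2}\sum_{\substack{x, y \in V_n \\ x \sim y}} |f_n(x) - f_n(y)| \le C\, 5^n\, E_1(f, r_n),
\end{equation*}
and Proposition~\ref{prop:comparison2} applied at $p = 1$ converts this into $\|\Phi_n\|_{BV(K)} \le C \mathcal{E}_1(\Phi_n) \le C\, r_n^{-d_h}\, E_1(f, r_n)$. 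Combining the two bounds produces $K(f, r_n^{d_h}) \le C E_1(f, r_n)$, and passage to arbitrary $r > 0$ follows by choosing $n$ with $r_n \le r \le r_{n-1}$ and invoking the Ahlfors-regularity comparability $E_1(f, r_n) \le C E_1(f, r)$, with the range $r \ge \mathrm{diam}\, K$ handled trivially via $K(f, t) \le \|f\|_{L^1}$.

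The main obstacle I anticipate is the sup-version of the $BV$ seminorm needed in the lower bound. At $p > 1$ this was Corollary~\ref{cor:sup-limsup} and rested on the continuity of Sobolev functions (Remark~\ref{rem:continuity}), but a $BV$ function on $K$ need not be continuous and a direct Morrey-type argument is unavailable; one must therefore invoke the quantitative $\sup$/$\limsup$ equivalence for $BV$ that is established in the general $d_h$-Ahlfors-regular framework of \cite{BV3}. The upper bound, by contrast, is essentially a transcription of the $p > 1$ argument, the only new input being that Proposition~\ref{prop:comparison2} is already stated uniformly over the range $p \ge 1$.
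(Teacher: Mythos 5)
Your proposal is correct and follows essentially the same route as the paper, which itself only sketches the argument by pointing to the two ingredients you identify: the $\sup$/$\limsup$ equivalence for the $BV$ seminorm proved in \cite{BV3} (needed for the lower bound on the $K$-functional, since the Morrey/continuity argument behind Corollary \ref{cor:sup-limsup} is unavailable at $p=1$), and Proposition \ref{prop:comparison2} at $p=1$ applied to the piecewise affine approximants $\Phi_n$ (needed to convert $\mathcal{E}_1(\Phi_n)\le C r_n^{-d_h}E_1(f,r_n)$ into a $BV$ bound). Your more detailed execution of the decomposition $f=(f-\Phi_n)+\Phi_n$ and the passage from the discrete scales $r_n$ to arbitrary $r$ matches the $p>1$ proof of Theorem \ref{thm:K-func} as the paper intends.
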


\begin{proof}
The proof is relatively similar to that of Theorem \ref{interpolation} so we will omit the details but focus on the main ingredients. The first ingredient which is proved in \cite{BV3} for any nested fractal using heat kernel methods is the estimate
\begin{align*}
  \sup_{r >0} \frac{1}{r^{d_h}} \int_K\int_{B(x,r) }\frac{|f(y)-f(x)|}{ \mu(B(x,r))}\, d\mu(y)\, d\mu(x)   \le  C \limsup_{r\to 0^+} \frac{1}{r^{d_h}} \int_K\int_{B(x,r) }\frac{|f(y)-f(x)|}{ \mu(B(x,r))}\, d\mu(y)\, d\mu(x).
\end{align*}
The second ingredient is Proposition  \ref{prop:comparison2} for $p=1$ and when $f$ is a piecewise affine function.
\end{proof}

\subsection{Real interpolation  of the Sobolev spaces}\label{inter sobo}

The interpolation with respect to the parameter $p$ is easier in view of the characterization of $W^{1,p}(K)$ given in Theorem \ref{carac Sobolev}.

\begin{thm}\label{real inter sobo}
For $1 \le p_1 <p < p_2 \le +\infty$,
\[
W^{1,p}(K)=(W^{1,p_1}(K),W^{1,p_2}(K))_{\theta,p}
\]
where $\theta \in (0,1)$ is such that
\[
\frac{1}{p}=\frac{1-\theta}{p_1}+\frac{\theta}{p_2}.
\]
\end{thm}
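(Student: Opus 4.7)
The plan is to identify $W^{1,p}(K)$ isomorphically with the product space $\mathbb{R} \oplus L^p(\mathcal{S}, \nu)$ via a map that is the same linear map for all $1 \le p \le +\infty$, and then to invoke the classical real interpolation of $L^p$ spaces combined with the standard retraction-coretraction principle.

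First I would fix the basepoint $q_5 \in K$ and define, for each $p$,
\[
J: W^{1,p}(K) \longrightarrow \mathbb{R} \oplus L^p(\mathcal{S}, \nu), \qquad J(f) = (f(q_5), \partial f),
\]
and the candidate inverse
\[
R: \mathbb{R} \oplus L^p(\mathcal{S}, \nu) \longrightarrow W^{1,p}(K), \qquad R(c,g)(x) = c + \int_{\gamma(q_5, x)} g \, d\nu,
\]
where $\gamma(q_5, x)$ denotes the geodesic path in the skeleton from $q_5$ to $x$ (first defined for $x \in \cup_n V_n$ and then extended continuously to $K$ using the Morrey bound of Theorem \ref{thm:Morrey}). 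The tree structure of $\mathcal{S}$ makes $R(c,g)$ well defined and path-independent, and Theorem \ref{carac Sobolev} guarantees $\partial R(c,g) = g$, so that $R(c,g) \in W^{1,p}(K)$.

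Next I would verify that $J$ and $R$ are mutually inverse Banach space isomorphisms, and are simultaneously bounded from $W^{1,p_i}(K)$ to $\mathbb{R} \oplus L^{p_i}(\mathcal{S}, \nu)$ (and vice versa) for $i = 1, 2$. Boundedness of $J$ combines the continuous embedding $W^{1,p}(K) \hookrightarrow C(K)$ (Remark \ref{rem:continuity} for $p>1$; Definition \ref{defW11} for $p=1$) to control the point evaluation $f(q_5)$, with Theorem \ref{carac Sobolev} for the gradient component. Boundedness of $R$ follows from Theorem \ref{thm:Morrey}, which gives $\|R(c,g)\|_{L^\infty(K)} \le |c| + \mathrm{diam}(K)^{1-1/p} \|g\|_{L^p(\mathcal{S}, \nu)}$ (with the analogous easy bounds at $p = 1$ and $p = \infty$). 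A direct computation yields both $J \circ R = \mathrm{Id}_{\mathbb{R} \oplus L^p(\mathcal{S},\nu)}$ (using $\gamma(q_5, q_5) = \{q_5\}$) and $R \circ J = \mathrm{Id}_{W^{1,p}(K)}$ (since $\int_{\gamma(q_5,x)} \partial f \, d\nu = f(x) - f(q_5)$ by Theorem \ref{carac Sobolev}).

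Finally, since the same underlying linear map realizes $J$ as a Banach space isomorphism for each $p$, the interpolation of the Sobolev spaces reduces to the classical identity $(L^{p_1}(\mathcal{S}, \nu), L^{p_2}(\mathcal{S}, \nu))_{\theta, p} = L^p(\mathcal{S}, \nu)$ for $\sigma$-finite measures (see \cite[Chapter 5]{BS}) together with the trivial $(\mathbb{R},\mathbb{R})_{\theta,p} = \mathbb{R}$. Transporting through $R$ yields
\[
(W^{1,p_1}(K), W^{1,p_2}(K))_{\theta, p} \simeq \mathbb{R} \oplus L^p(\mathcal{S}, \nu) \simeq W^{1,p}(K)
\]
with equivalent norms. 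The hard part will really just be keeping the endpoints straight (using Theorem \ref{thm:charactW1infty} to give meaning to $\partial f \in L^\infty(\mathcal{S}, \nu)$ when $p = \infty$, and Definition \ref{defW11} when $p = 1$) and checking the operator-norm bounds of $J$ and $R$ at those endpoints; no substantive new difficulty is expected.
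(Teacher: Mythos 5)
Your proposal is correct and is essentially the paper's own argument: the paper likewise reduces the statement to the classical interpolation identity $(L^{p_1}(\mathcal S,\nu),L^{p_2}(\mathcal S,\nu))_{\theta,p}=L^{p}(\mathcal S,\nu)$ by observing that $f\mapsto \partial f$ is a bi-Lipschitz isomorphism from $W_0^{1,p}(K)=\{f\in W^{1,p}(K),\,f(q_5)=0\}$ onto $L^p(\mathcal S,\nu)$ for every $1\le p\le+\infty$. Your version with $\mathbb{R}\oplus L^p(\mathcal S,\nu)$ and the explicit inverse $R$ is just a slightly more detailed bookkeeping of the same isomorphism and retraction argument.
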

\begin{proof}
For every $1 \le p \le +\infty$ the map $ f \to \partial f$ is a bi-Lipschitz isomorphism $W_0^{1,p}(K) \to L^p(K,\nu)$, where $W_0^{1,p}(K)=\{ f \in W^{1,p}(K), f(0)=0 \}$. The measure $\nu$ is sigma-finite, and therefore
\[
L^{p}(K,\nu)=(L^{p_1}(K,\nu),L^{p_2}(K,\nu))_{\theta,p}.
\]
The result follows.
\end{proof}

\bibliographystyle{plain}
\bibliography{mybib.bib}
\

\noindent
Fabrice Baudoin: \url{fabrice.baudoin@uconn.edu}\\
Department of Mathematics,
University of Connecticut,
Storrs, CT 06269

\

\noindent Li Chen: \url{lichen@lsu.edu}\\
Department of Mathematics, Louisiana State University, Baton Rouge, LA 70803
\end{document}